\setlist[enumerate,1]{label=(\roman*), font = \normalfont} 
\let\originalleft\left
\let\originalright\right
\renewcommand{\left}{\mathopen{}\mathclose\bgroup\originalleft}
\renewcommand{\right}{\aftergroup\egroup\originalright}
\newcommand{\N}{\mathbb{N}}
\newcommand{\Z}{\mathbb{Z}}
\newcommand{\R}{\mathbb{R}}
\newcommand{\C}{\mathbb{C}}
\newcommand{\T}{\mathbb{T}}
\renewcommand{\P}{\mathbb{P}}
\newcommand{\E}[1]{\mathbb{E} \left[#1\right]}
\newcommand{\Ec}[2]{\mathbb{E} \left[#1\middle|#2\right]}
\newcommand{\e}{e}
\newcommand{\ep}{\varepsilon}
\newcommand{\de}{\coloneqq}
\newcommand{\dt}{\mathrm{d}t}
\newcommand{\du}{\mathrm{d}u}
\newcommand{\dx}{\mathrm{d}x}
\newcommand{\F}{\mathrm F}
\newcommand{\ga}{\gamma}
\newcommand{\G}[1]{\Gamma\left(#1\right)}
\newcommand{\ind}{\mathds{1}}
\newcommand{\nor}[1]{\left\| #1 \right\|_\gamma}
\newcommand{\norm}[1]{\left\| #1 \right\|}
\newcommand{\ii}{\mathrm i}
\newcommand{\dz}{\mathrm d z}
\newcommand{\M}{\mathrm M_{\ii\beta}}
\newcommand{\dth}{\mathrm{d}\theta}
\newcommand{\dthp}{\mathrm{d}\theta'}
\newcommand{\gap}{\mathrm{gap}}
\theoremstyle{plain}
\newtheorem{thm}{Theorem}[section]
\newtheorem{prop}[thm]{Proposition}
\newtheorem{lem}[thm]{Lemma}
\newtheorem{cor}[thm]{Corollary}
\newtheorem{conj}[thm]{Conjecture}
\theoremstyle{definition}
\newtheorem{defi}[thm]{Definition}
\theoremstyle{remark}
\newtheorem{rem}[thm]{Remark}
\def\l@section{\@tocline{1}{0pt}{0pt}{0pt}{}}          
\def\l@subsection{\@tocline{2}{0pt}{0pt}{2.3em}{}}     
\def\l@subsubsection{\@tocline{3}{0pt}{0pt}{5.3em}{}}  
\title{Fourier dimension of imaginary Gaussian multiplicative chaos}
\author{Benjamin Bonnefont}
\author{Hermanni Rajamäki}
\author{Vincent Vargas}
\thanks{All authors: Université de Genève, Section de mathématiques et département de physique théorique, 1211 Geneva, Switzerland.\\
	\textit{E-mails}: \href{mailto:benjamin.bonnefont@unige.ch}{benjamin.bonnefont@unige.ch}, \href{mailto:hermanni.rajamaki@unige.ch}{hermanni.rajamaki@unige.ch}, \href{mailto:vincent.vargas@unige.ch}{vincent.vargas@unige.ch}.}
\subjclass[2020]{60F05, 60G20, 60G57, 42A16, 05E05}
\keywords{Gaussian multiplicative chaos, imaginary chaos, log-correlated fields, Fourier dimension, Jack polynomials}
\begin{document}

\begin{abstract}
	We study the high-frequency Fourier asymptotics of imaginary Gaussian multiplicative chaos on the unit circle, a complex-valued random distribution formally given by $\M=\exp(\ii\beta X)$, where $X$ is a log-correlated Gaussian field. In the subcritical phase $\beta\in(0,1)$, we prove that its Fourier dimension, defined by the optimal polynomial decay exponent of $|\widehat\M(n)|^2$, is almost surely equal to $1-\beta^2$. This result holds for a broad class of log-correlated fields whose covariance differs from the exact logarithmic kernel by a sufficiently regular function.
	
	For the exactly log-correlated field on the circle, we obtain the following results. We prove that the chaos almost surely fails to belong to $H^{-\beta^2/2}(\mathbb T)$, the critical Sobolev space left open by previous regularity results. We further establish a central limit theorem: the rescaled coefficients $n^{(1-\beta^2)/2}\widehat \M(n)$ converge in law to an isotropic complex Gaussian random variable, and finitely many consecutive coefficients converge jointly to independent copies. The high-frequency content of $\M$ behaves as a white noise: $n^{(1-\beta^2)/2}e^{\ii n\theta}\M$ converges in $H^s(\mathbb T)$, $s<-1/2$, to a complex white noise with explicit intensity $\kappa(\beta)=\frac{1}{\pi}\Gamma(1-\beta^2)
	\sin\big(\frac{\pi\beta^2}{2}\big)$.
	
	The proof relies on moment identities obtained from Coulomb-gas integrals and Jack-polynomial expansions. Their asymptotic analysis is governed by partitions with large gaps, where the Pieri coefficients appearing in these expansions simplify, and the leading contribution becomes explicit.

\end{abstract}

\maketitle

\section{Introduction}

Gaussian multiplicative chaos (GMC) is a theory of random measures, and
more generally random distributions, obtained by exponentiating log-correlated Gaussian fields. It was introduced by Kahane \cite{kahane85}, following earlier ideas of Mandelbrot \cite{mandelbrot74} on turbulence and H{\o}egh-Krohn \cite{hoeghkrohn71} in quantum field theory. Over the last decades, GMC has become a central object in probability and mathematical physics, with connections to turbulence, finance, conformal field theory, random matrix theory and related areas. Since log-correlated fields are random distributions rather than functions, the exponential must be defined through a regularization and renormalization procedure. In the real setting, this construction yields a canonical multifractal measure. The complex-valued theory is still much less explored, especially in the purely imaginary regime considered here.

In this work, we focus on the one-dimensional setting of the unit circle $\mathbb T=\mathbb R/2\pi\mathbb Z$. The underlying field is the trace of the two-dimensional Gaussian free field on  $\mathbb T$, with zero average. Equivalently, it is the centered Gaussian field  $X$ with covariance
\[
\E{X_{\theta}X_{\theta'}}
=
\log \frac{1}{\left| e^{\ii\theta}-e^{\ii\theta'}\right|},
\]
understood as a random distribution on $\mathbb T$ in the sense of Schwartz. If  $X_\varepsilon$ is a smooth convolution approximation of  $X$ on $\mathbb T$ and $\gamma\in\C$, we consider the renormalized exponentials
\begin{equation}\label{gmc}
	\mathrm M_{\gamma}^{\varepsilon}(\dth)
	\de
	\exp\left(\gamma X_\varepsilon(\theta)-\frac{\gamma^2}{2}\E{X_\varepsilon(\theta)^2}\right)\,\dth.
\end{equation}
The theory of GMC is concerned with the limit of  $\mathrm M_\gamma^{\varepsilon}$ as $\varepsilon\to0$, with convergence understood in a topology depending on the parameter $\gamma$.

For real  $\gamma$ with  $|\gamma|<\sqrt2$, this limit exists in probability for the weak topology of measures and is independent of the choice of mollifier; see for instance \cite{berestycki17}, and \cite{SHAMOV16} for a general uniqueness and convergence theorem. The limit is a non-trivial random measure, denoted by  $\mathrm M_\gamma$. It is carried by a set of Hausdorff dimension  $1-\gamma^2/2$, while its finer local behavior is multifractal \cite{rhodesvargas14,bertacco23}. For  $|\gamma|\ge \sqrt2$, the renormalization in \eqref{gmc} degenerates and different procedures are needed, see for instance \cite{lacoin24} for the critical case and \cite{bertaccohairer25} for the supercritical case.

One may also consider complex parameters  $\gamma=\alpha+\ii\beta$. In that case, the limit is no longer a positive measure in general, but rather a complex-valued random distribution. A discrete analogue, given by multiplicative cascades with complex weights was first studied in the physics literature in \cite{des93} and on the mathematical side in \cite{barraljinmandelbrot10}. In the one-dimensional setting considered here, the standard renormalization gives a non-trivial limit in the subcritical domain
\[
\mathcal D \de
\left\{|\gamma|<1\right\}\;\cup\;\left\{|\alpha|\in(1,\sqrt2) \text{ and } |\alpha|+|\beta|\leq\sqrt2\right\},
\]
see for instance \cite{lacoin22subchaos}. The behavior in other regions of the parameter space is related to further phases of complex multiplicative chaos and has also been studied in \cite{lacoin22,lrv2015}.

The present paper is concerned with the purely imaginary regime inside $\mathcal D$, namely $\gamma=\ii\beta$, with  $|\beta|<1$. In this case, \eqref{gmc} becomes
\[
\M^{\varepsilon}(\dth)
=
\exp\left(\ii\beta X_\varepsilon(\theta)+\frac{\beta^2}{2}\E{X_\varepsilon(\theta)^2}\right)\,\dth .
\]
This sequence does not converge to a measure. Instead, viewed as a sequence of random distributions, it converges in probability to a limit $\M$ in negative Sobolev spaces  $H^s(\mathbb T) $, for $s<-1/2$, see \cite{JunnilaSaksmanWebb20}. We recall that
\[
H^s(\mathbb T)
\de
\left\{\varphi\in\mathcal D'(\mathbb T):\norm{\varphi}_{H^s(\mathbb T)}^2
=
\sum_{n\in\mathbb Z}(1+n^2)^s\,|\widehat\varphi(n)|^2<\infty\right\},
\]
where, with our convention,
\[
\widehat\varphi(n)
\de
\frac{1}{2\pi}\left\langle \varphi, e^{-\ii n\theta}\right\rangle
\]
denotes the  $n$-th Fourier coefficient.

The imaginary case differs markedly from the real one. $\M$ is no longer a positive measure, nor a complex measure: it is a proper random distribution. Its total mass has finite moments of every order, unlike in the real case, where positive moments exist only up to a critical threshold. Imaginary chaos also exhibits a monofractal behavior \cite{arubaverezjegojunnila25}. For further properties and motivations, we refer to the original account \cite{JunnilaSaksmanWebb20} and to \cite{Arujegojunnila22} for density results. 

Imaginary Gaussian exponentials are closely related to classical Coulomb-gas and sine-Gordon models; see for instance \cite{Frohlich76} for the historical setting and \cite{lrv23} for a recent probabilistic construction. On the circle, the corresponding Coulomb-gas partition functions have a natural connection with Jack polynomials, already explored in the physics literature \cite{fendleylesagesaleur95}. Imaginary GMC also appears naturally in recent probabilistic approaches to imaginary Liouville theory \cite{UsciatiGuillarmouRhodesSantachiara26}.

Since $\M$ is a genuine complex-valued distribution rather than a positive measure, the usual geometric notions of dimension for measures do not apply directly. Fourier dimension therefore provides a natural harmonic counterpart in this setting. In the real case, this direction was initiated in \cite{garban2024harmonicanalysisgaussianmultiplicative}, where it was proved that the Fourier coefficients of $\mathrm M_\gamma$ tend to $0$ almost surely and sharp decay exponents were conjectured. These conjectures were recently established for multiplicative cascade models in \cite{chen2025harmonicanalysismandelbrotcascades} and for one-dimensional GMC in \cite{lin2025harmonicanalysismultiplicativechaos}. These results have since been extended to higher dimensional and other models \cite{lin2025harmonicanalysismultiplicativechaosII,chen2025exactvaluesfourierdimensions}. The behavior of the Fourier coefficients of critical real GMC has been investigated in \cite{arguin2026fouriercoefficientscriticalgaussian}. These methods rely crucially on the positivity of $\mathrm M_\gamma$ and on multifractal mass estimates, neither of which is available in the imaginary regime. Let us also mention that central limit theorems for the rescaled Fourier coefficients, analogous to those we obtain below, have been established for the holomorphic multiplicative chaos, introduced in \cite{najnudelpaquettesimm23} as a scaling limit of characteristic polynomials of unitary random matrices and further developed in \cite{najnudel2025fouriercoefficientsholomorphicmultiplicative, atherfold2025fouriercoefficientscriticalholomorphic}.

The goal of the present paper is to continue this harmonic analysis in the purely imaginary setting. We set
\begin{equation*}
	c_n \de \widehat{\M}(n).
\end{equation*}

The Fourier dimension quantifies the decay of the Fourier coefficients of $\varphi\in\mathcal D'(\mathbb T)$. This is an extension of the usual definition for measures to distributions and it is given by
\begin{equation*}
	\dim_{\mathrm F}(\varphi)\de \sup\left\{\, s\in[0,1] : |\widehat{\varphi}(n)|^2 = O(|n|^{-s}) \text{ as } |n|\to\infty \right\}.
\end{equation*}
Our first result identifies the Fourier dimension of $\M$.
\begin{thm}[Fourier dimension]\label{thm:1}
	For $\beta\in(0,1)$, almost surely,
	\[
	\dim_{\mathrm F}(\M) = 1-\beta^2.
	\]
\end{thm}
An upper bound on the Fourier dimension follows from the regularity results obtained in \cite{JunnilaSaksmanWebb20}. The authors prove that almost surely $\M\in H^s(\mathbb T)$ for $s<-\beta^2/2$ and $\M\notin H^s(\mathbb T)$ for $s>-\beta^2/2$, which implies $\dim_{\mathrm F}(\M)\le 1-\beta^2$. Indeed, any pointwise decay exponent strictly larger than $1-\beta^2$ would imply membership in some $H^s(\mathbb T)$ with $s>-\beta^2/2$. The content of Theorem \ref{thm:1} is to establish the lower bound. The sharp moment control used in the proof of the theorem allows us to determine the critical Sobolev regularity of $\M$.
\begin{cor}[Critical Sobolev regularity]\label{cor:sobcritic}
	Almost surely, $\M\notin H^{-\beta^2/2}(\T)$.
\end{cor}
An analogous result has been obtained for the imaginary chaos associated to the two-dimensional Gaussian free field in \cite{arubaverezjegojunnila25}. The proof of Theorem \ref{thm:1} relies primarily on the integrable structure provided by the exact logarithmic kernel through the Jack-polynomial expansion of the moments, see Section \ref{sec:mom1}. The Fourier-dimension statement itself, however, is not an artifact of this exact structure. We extend it to a class of underlying fields $X^g$ on the circle, with covariance
\[
K(\theta,\theta') \de \log\frac{1}{|e^{\ii\theta}-e^{\ii\theta'}|} + g(\theta,\theta'),
\]
assumed to be positive semi-definite. The corresponding imaginary chaos $\M^g$ is defined as in \eqref{gmc} from a regularization of $X^g$. We treat two regularity regimes: a milder assumption $g \in H^{1+\delta}(\mathbb T)$ in the stationary case where $g(\theta,\theta')$ is a function of $\theta-\theta'$ alone, and the stronger requirement $g \in H^{3/2+\delta}(\mathbb T^2)$ in the general case.

\begin{thm}[Robustness under smooth perturbations]\label{thm:robustness}
	If $g \in H^{1+\delta}(\mathbb T)$ for some $\delta>0$ in the stationary case, or $g\in H^{3/2+\delta}(\mathbb T^2)$ for some $\delta>0$ in the general case, then, almost surely
	\[
	\mathrm{dim}_{\mathrm F}(\M^g) = 1-\beta^2.
	\]	
\end{thm}

We now turn to finer asymptotics in the exactly log-correlated case, where the explicit moment identities yield a central limit theorem for the rescaled Fourier coefficients. We denote by $\mathcal N_\C(0,\sigma^2)$ the complex Gaussian random variable such that 
\[
\mathbb E\, \mathcal N_\C(0,\sigma^2)^2=0 \quad
\text{ and }
\quad
\mathbb E\, |\mathcal N_\C(0,\sigma^2)|^2=\sigma^2.
\]
\begin{thm}[Convergence in law for the rescaled coefficients]\label{thm:CLT}
	The following convergence in distribution holds:
	\begin{equation*}
		n^{\frac{1-\beta^2}{2}}\, c_n \longrightarrow  \mathcal N_{\mathbb C}(0,\kappa(\beta)),\qquad \text{ as } n\rightarrow\infty,
	\end{equation*}
	where $\kappa(\beta) = \frac{1}{\pi} \Gamma(1-\beta^2)\sin(\frac{\pi\beta^2}{2})$. Moreover, for any fixed integer $k\geq0$, we have the finite-dimensional convergence
	\begin{equation*}
		n^{\frac{1-\beta^2}{2}}\, (c_n,\dots,c_{n+k}) \longrightarrow  (\mathcal N^0_{\mathbb C}(0,\kappa(\beta)),\dots,\mathcal N^k_{\mathbb C}(0,\kappa(\beta))),\qquad \text{ as } n\rightarrow\infty,
	\end{equation*}
	where the Gaussian random variables are independent.
\end{thm}
Let $W$ be a complex white noise of intensity $\kappa(\beta)$, i.e., the random distribution given by
\begin{equation*}
	W \de \sum_{k\in\Z}\xi_k\,\e^{\ii k\theta},
\end{equation*}
where $(\xi_k)$ is an i.i.d. sequence of complex Gaussian random variables $\mathcal N_{\mathbb C}(0,\kappa(\beta))$. A similar noise-like behavior was established for the two-dimensional Gaussian free field in \cite{arubaverezjegojunnila25}.
\begin{thm}[Convergence toward a complex white noise]\label{thm:whitenoise} 
	Let $s<-\frac{1}{2}$. We have the following convergence in distribution in $H^{s}(\mathbb T)$:
	\begin{equation*}
		n^{\frac{1-\beta^2}{2}}\,\e^{\ii n\theta} \,\mathrm M_{\ii\beta}\longrightarrow W,\qquad \text{ as } n\rightarrow\infty.
	\end{equation*}
\end{thm}

\begin{rem}
	Theorems \ref{thm:CLT} and \ref{thm:whitenoise} are stated only for the exactly log-correlated field. Their proofs rely on Coulomb-gas and Jack-polynomial identities, which are no longer available under perturbation. A formal computation based on the factorization $\M^g=e^{\beta^2 g(\theta,\theta)/2} e^{\ii\beta Y(\theta)}\cdot \M$ from Lemma \ref{lem:factor} suggests that $n^{(1-\beta^2)/2}\,e^{\ii n\theta}\,\M^g$ should converge to a complex Gaussian white noise with non-homogeneous intensity $\kappa(\beta)\,e^{\beta^2 g(\theta,\theta)}$. This can be shown in the case where $g$ is positive semidefinite, by taking $Y$ independent of $X$ in the factorization. A rigorous proof in the general case, which we do not pursue here, would require a careful treatment of the asymptotic dependence between $Y$ and the high-frequency content of $\M$.
\end{rem}

Beyond the purely imaginary case, the Sobolev regularity of complex multiplicative chaos obtained in \cite{junnilasaksmanviitasaari2019} suggests that the imaginary part of the parameter should lower the Fourier dimension by $\beta^2$. This suggests the following open problem.

\begin{conj}\label{conj:1}
	For $\gamma=\alpha+\ii\beta\in \mathcal D$, one has almost surely
	\begin{equation*}
		\mathrm{dim}_{\mathrm F}(\mathrm M_{\gamma}) = \mathrm{dim}_{\mathrm F}(\mathrm M_{\alpha}) -\beta^2.
	\end{equation*}
\end{conj}

\subsection*{Strategy.}
We proceed by the method of moments. The rotational invariance of the field implies that each Fourier mode $c_n$ is isotropic in the complex plane. Hence, for a single mode, convergence in law to a complex Gaussian is reduced to the convergence of the absolute moments $\mathbb E |c_n|^{2N}$. These moments have a Coulomb-gas integral representation on the circle. Using Stanley's Cauchy identity, together with the shift property and orthogonality of Jack polynomials, one reduces $\mathbb E |c_n|^{2N}$ to an explicit positive sum over partitions. The asymptotic analysis of this sum gives the limiting Gaussian moments and, through a summability argument, the lower bound on the Fourier dimension. From there, using concentration of $c_n$ and Kolmogorov's 0-1 law, we settle the critical Sobolev regularity of $\M$. 

The case of a general chaos $\M^g$ is treated by combining a spectral coupling with a convolution argument. In the stationary case, a Fourier decomposition realizes $X^g$ as the sum of a standard log-correlated field $X$ and a Gaussian remainder, whose regularity is dictated by the decay of $\hat g$. The pointwise decay of $c_n$ obtained in Theorem \ref{thm:1} then transfers to the Fourier coefficients of $\M^g$ through a convolution argument. The non-stationary case follows a similar scheme, with a Karhunen–Loève type coupling obtained in \cite{JSW20}.

The joint convergence of several modes requires a refinement of the single-mode argument. Mixed moments of $c_n,\ldots,c_{n+k}$ lead, after symmetrization, to products of Jack polynomials by certain symmetric polynomials. These products are controlled by Pieri formulas, which produce a finite decomposition indexed by shapes added to the underlying partitions. The main idea is then to show that it is sufficient to analyze this decomposition in a large-gap regime, where the gaps between consecutive parts of the partition tend to infinity. In this regime, the Pieri coefficients and other relevant quantities are asymptotically equal to $1$. This enables to reveal the leading contribution of the asymptotics of the mixed moments.

\subsection*{Structure of the article.}
In Section \ref{sec:mom1}, we study the precise asymptotic behavior of the moments of a single Fourier mode. This is the core of the proof of the convergence of one rescaled coefficient and, together with a standard summability argument, yields the optimal lower bound on the Fourier dimension. In particular, this section proves Theorem \ref{thm:1} and the first part of Theorem \ref{thm:CLT}. The section ends with the proof of Corollary \ref{cor:sobcritic}. Section \ref{sec:robustness} is dedicated to the proof of the generalization to general chaos. Section \ref{sec:mom2} is devoted to the asymptotic analysis of joint moments of several Fourier modes. This allows us to prove the joint convergence statement in the second part of Theorem \ref{thm:CLT}. Finally, in Section \ref{sec:whitenoise}, we derive the convergence of the shifted chaos toward complex white noise, namely Theorem \ref{thm:whitenoise}. Appendix \ref{partition} collects the elementary facts on partitions and Young diagrams used throughout the paper.

\section{The moments of $c_n$}\label{sec:mom1}
For complex-valued random variables, the convergence in distribution is a consequence of the convergence of the mixed moments
\begin{equation*}
	\E{Z_n^p \overline{Z_n}^q} \longrightarrow \E{Z^p \overline{Z}^q} \qquad \text{ for }p,q\in \N,
\end{equation*}
as soon as the law of $Z$ is characterized by its moments. This is the case for the complex Gaussian distribution $\mathcal N_{\mathbb C}$. Here, an additional symmetry greatly simplifies matters, since the law of $c_n$
is isotropic for all $n>0$. Recall that a complex-valued random variable $Z$  is isotropic if for every $\alpha\in\R$, $Z$ has the same law as $\e^{\ii\alpha} Z$. It is equivalent to the following two properties:
\begin{enumerate}
	\item $\theta$ is uniformly distributed on $\left[0,2\pi\right]$,
	\item $\theta$ is independent of $R$.
\end{enumerate}
where $(R,\theta)$ is the polar decomposition of $Z$.
For isotropic random variables, the convergence in distribution is equivalent to the convergence in distribution of the modulus $R$. Indeed, suppose that $f$ is continuous and bounded on $\mathbb C$ and let $g(r)=\frac{1}{2\pi}\int_{|z|=r}f(z)\dz$, then
\begin{equation*}
	\E{f(Z_n)} = \E{f(R_n e^{\ii \theta_n})} = \E{\Ec{f(R_n e^{\ii \theta_n})}{R_n}} = \E{g(R_n)}\to \E{g(R)} = \E{f(Z)}.
\end{equation*}
The isotropy of $c_n$ is a consequence of the invariance by rotation of the field $X$:
\begin{equation*}
	e^{\ii \alpha} c_n 
	=
	\frac{1}{2\pi}\int_0^{2\pi} \e^{-\ii n \theta} \,\e^{\ii\beta X(\theta+\frac{\alpha}{n})+\frac{\beta^2}{2}\E{X(\theta+\frac{\alpha}{n})^2}}\,\dth \overset{\mathrm{(d)}}{=} c_n.
\end{equation*}
Therefore, to prove convergence in law of the rescaled Fourier coefficient to $\mathcal N_{\mathbb C}(0,\kappa(\beta))$,
it suffices to show that, for each $N$,
\begin{equation*}
	n^{(1-\beta^2)N}\mathbb E|c_n|^{2N}\longrightarrow N!\,\kappa(\beta)^N, \quad \text{as }n\to\infty.
\end{equation*}

On the circle, the $2N$-th moment of $c_n$ is explicitly given by the Coulomb-gas type integral
\begin{equation}\label{eq:2Nmo}
	\mathbb E|c_n|^{2N}
	=
	\frac{1}{(2\pi)^{2N}}
	\int_{[0,2\pi]^{2N}}
	\e^{-\ii n\sum_{i=1}^N(\theta_i-\theta_i')}\;
	\dfrac{\prod_{1\leq i<j\leq N} |\e^{\ii\theta_i}-\e^{\ii\theta_j}|^{\beta^2}\,|\e^{\ii\theta'_i}-\e^{\ii\theta'_j}|^{\beta^2}}{
	\prod_{1\leq i,j\leq N}|\e^{\ii\theta_i}- \e^{\ii\theta'_j}|^{\beta^2}}\,
	\dth\,\dthp.
\end{equation}
The factors $\prod_{i<j}|\e^{\ii\theta_i}-\e^{\ii\theta_j}|^{\beta^2}$ induce a natural scalar product, which we study in the next section.
\begin{rem}
	When $n=0$, the above integral coincides with the partition function of the
	two-component log-gas on the unit circle.
	Equivalently, it may be viewed as a two-dimensional Coulomb gas with charges
	constrained to lie on the circle. This circle model has appeared in the physics
	literature, see e.g. \cite{fendleylesagesaleur95}. We also mention that related
	two-component Coulomb gases in the plane have been extensively studied in
	\cite{lebleserfatyzeitouni17} and subsequent works. There, the asymptotic behavior is studied in the different regime $N\rightarrow\infty$.
\end{rem}

\subsection{Selberg inner product and Jack polynomials}
From now on, let us denote $\gamma=\beta^2/2$\footnote{This choice of
	notation is made to match the standard parameter used for Jack polynomials.
	It should not be confused with the intermittency parameter of GMC, which in
	the imaginary case considered here is $\ii\beta$.}. On $\mathbb{T}^N$, the Selberg inner product is defined as
\begin{equation*}
	\langle f,g\rangle_{\gamma}\;=\;\frac{1}{(2\pi)^N}\int_{[0,2\pi]^{N}}
	f(\e^{\ii\theta})\; \overline{g(\e^{\ii\theta})}\,
	\prod_{1\le i<j\le N}|\e^{\ii\theta_i}-\e^{\ii\theta_j}|^{2\gamma}\,\dth,
\end{equation*}
where $f(\e^{\ii\theta}) = f(\e^{\ii\theta_1},\cdots,\e^{\ii\theta_N})$.
There is a natural orthogonal basis of symmetric and homogeneous polynomials with real coefficients $P_{\lambda}^{(1/\gamma)}$, called the Jack polynomials, which are indexed by integer partitions $\lambda=\left(\lambda_1\ge\cdots\ge\lambda_{N}\ge0\right)$ of length smaller than $N$. The definitions and properties of partitions used throughout the paper are gathered in Appendix \ref{partition}. The orthogonality of the polynomials reads
\begin{equation}\label{eq:orth}
	\int_{[0,2\pi]^{N}}
	P_{\lambda}^{(1/\gamma)}(\e^{\ii\theta})\,
	P_{\mu}^{(1/\gamma)}(\e^{-\ii\theta})\,
	\prod_{i<j}|\e^{\ii\theta_i}-\e^{\ii\theta_j}|^{2\gamma}\,\dth
	=
	(2\pi)^{N}\,\delta_{\lambda\mu}\,\left\|P_{\lambda}^{(1/\gamma)}\right\|_{\gamma}^{2}.
\end{equation}
For an extensive review of Jack polynomials, we encourage the reader to consult Macdonald's book \cite[Chapter VI]{Macdonald95} and for its connection with the Selberg integral, we refer to \cite{ForresterWarnaar08}.

We gather here the principal facts about Jack polynomials required for the present work. To lighten the notation, we omit  the superscript $(1/\ga)$ in what follows. First, their norm is given by
\begin{equation}\label{eq:norm}
	\left\|P_\lambda\right\|_{\gamma}^{2}
	=
	\frac{\Gamma(1+N\gamma)}{\Gamma(1+\gamma)^{N}}\;
	\frac{c'_\lambda}{[1+(N-1)\gamma]_{\lambda}}\,P_\lambda(1^N).
\end{equation}
The generalized Pochhammer symbol is defined by
\begin{equation*}
	[b]_\lambda=\displaystyle\prod_{i=1}^{N}\left(b+(1-i)\gamma\right)_{\lambda_i}, \qquad
	(b)_k = b(b+1)\cdots(b+k-1),
\end{equation*}
and
\begin{equation}
	c_\lambda=\prod_{s\in\lambda}\left(a_\lambda(s)+\gamma\,l_\lambda(s)+\gamma\right),\qquad
	c'_\lambda=\prod_{s\in\lambda}\left(a_\lambda(s)+\gamma\,l_\lambda(s)+1\right),
\end{equation}
where the products in $c_\lambda$ and $c'_\lambda$ are taken over boxes $s$ of the Young diagram of $\lambda$ and involve the arm- and leg-lengths $a_\lambda(s)$ and $l_\lambda(s)$ (see Appendix \ref{partition} for a precise definition). The value at $1=(1,\cdots,1)$ of the polynomials is given by
\begin{equation}\label{eq:value1}
	P_\lambda(1) = \frac{\left[N\gamma\right]_{\lambda}}{c_{\lambda}}.
\end{equation}
\subsection*{Stanley's Cauchy identity.} The following identity will play a key role in what follows:
\begin{equation}\label{eq:CS}
	\prod_{i,j}(1-x_i y_j)^{-\gamma}
	=
	\sum_{\lambda}\frac{c_\lambda(\gamma)}{c'_\lambda(\gamma)}\,
	P_\lambda(x)\,P_\lambda(y).
\end{equation}

\subsection*{Shift property.} One last property is 
\begin{equation}\label{eq:shift}
	(x_1\cdots x_N)^n\,P_\lambda(x)=P_{\lambda+n}(x),
\end{equation}
where $\lambda+n$ is obtained by adding $n$ to each part of $\lambda$, in other words $\lambda+n = (\lambda_1+n,\dots,\lambda_N+n)$.
As a consequence,
\begin{equation*}
	P_\lambda(1) =P_{\lambda+n}(1) = \frac{\left[N\gamma\right]_{\lambda+n}}{c_{\lambda+n}}.
\end{equation*}

\subsection{Jack expansion of the moments}\label{sub:jackexp}
Applying \eqref{eq:CS} to $x_i=\e^{\ii\theta_i}$ and $y_j=\e^{-\ii\theta'_j}$, and using the fact that Jack coefficients are real, hence $\overline{P_\lambda(\e^{\ii\theta})}=P_\lambda(\e^{-\ii\theta})$ on the unit circle, we may formally rewrite the denominator in the integral representation as 
\begin{align}\label{eq:2sum}
	\prod_{i,j}|\e^{\ii\theta_i}- \e^{\ii\theta'_j}|^{-2\gamma}
	&=\prod_{i,j}(1-\e^{\ii\theta_i}\e^{-\ii\theta'_j})^{-\gamma}\,
	\prod_{i,j}(1-\e^{-\ii\theta_i}\e^{\ii\theta_j'})^{-\gamma}\\
	&=\sum_{\lambda,\nu}\frac{c_\lambda}{c'_\lambda}\,\frac{c_\nu}{c'_\nu}
	P_\lambda(\e^{\ii \theta})\,P_\lambda(\e^{-\ii \theta'})P_\nu(\e^{-\ii \theta})\,P_\nu(\e^{\ii \theta'}).\nonumber
\end{align}
Combining \eqref{eq:2Nmo} with \eqref{eq:2sum} yields the double series expansion
\begin{align}\label{eq:doublesum}
	\mathbb E|c_n|^{2N}
	&=\frac{1}{(2\pi)^{2N}}\int_{[0,2\pi]^{2N}}
	\e^{-\ii n\sum_i(\theta_i-\theta_i')}
	\prod_{ i<j} |\e^{\ii\theta_i}-\e^{\ii\theta_j}|^{2\gamma}\,|\e^{\ii\theta'_i}-\e^{\ii\theta'_j}|^{2\gamma}\,
	\prod_{i,j}|\e^{\ii\theta_i}- \e^{\ii\theta'_j}|^{-2\gamma}\,
	\dth\,\dthp\nonumber\\
	&=\frac{1}{(2\pi)^{2N}}\int_{[0,2\pi]^{2N}}
	\e^{\ii n\sum_i(\theta_i-\theta_i')}
	\prod_{ i<j} |\e^{\ii\theta_i}-\e^{\ii\theta_j}|^{2\gamma}\,|\e^{\ii\theta'_i}-\e^{\ii\theta'_j}|^{2\gamma}\,
	\prod_{i,j}|\e^{\ii\theta_i}- \e^{\ii\theta'_j}|^{-2\gamma}\,
	\dth\,\dthp\nonumber\\
	&=\sum_{\lambda,\nu}\frac{c_\lambda}{c'_\lambda}\frac{c_\nu}{c'_\nu}\,
	\frac{1}{(2\pi)^{2N}}\int_{[0,2\pi]^{2N}}
	\e^{\ii n\sum_i(\theta_i-\theta_i')}\,P_\lambda(\e^{\ii\theta})\,P_\lambda(\e^{-\ii\theta'})\;P_\nu(\e^{-\ii\theta})\;P_\nu(\e^{\ii\theta'})\\
	&\qquad\qquad\qquad\qquad\qquad\prod_{ i<j} |\e^{\ii\theta_i}-\e^{\ii\theta_j}|^{2\gamma}\,|\e^{\ii\theta'_i}-\e^{\ii\theta'_j}|^{2\gamma}\,\dth\,\dthp\nonumber.
\end{align}
A rigorous justification is given at the end of this subsection.

Now, the shift property \eqref{eq:shift} together with the orthogonality \eqref{eq:orth} reduces the expression above to the following series:
\begin{align*}
	\mathbb E|c_n|^{2N}
	&=\sum_{\lambda,\nu}\frac{c_\lambda}{c'_\lambda}\frac{c_\nu}{c'_\nu}\,
	\frac{1}{(2\pi)^{2N}} \int P_{\lambda+n}(\e^{\ii\theta})\, P_{\lambda+n}(\e^{-\ii\theta'})\,P_\nu(\e^{-\ii\theta})\,P_\nu(\e^{\ii\theta'})\prod_{ i<j} |\e^{\ii\theta_i}-\e^{\ii\theta_j}|^{2\gamma}\,|\e^{\ii\theta'_i}-\e^{\ii\theta'_j}|^{2\gamma}\,\,\dth\,\dthp\nonumber\\
	&=\sum_{\lambda,\nu}\frac{c_\lambda}{c'_\lambda}\frac{c_\nu}{c'_\nu}\,
	\Bigg|\frac{1}{(2\pi)^{N}}\int_{[0,2\pi]^N} P_{\lambda+n}(\e^{\ii\theta})\, P_\nu(\e^{-\ii\theta})
	\prod_{i<j} |\e^{\ii\theta_i}-\e^{\ii\theta_j}|^{2\gamma}\,\dth\Bigg|^2\\
	&=\sum_{\lambda}\frac{c_\lambda}{c'_\lambda}\frac{c_{\lambda+n}}{c'_{\lambda+n}}\,
	 \nor{P_{\lambda+n}}^4.
\end{align*}
The norm given in \eqref{eq:norm} and the value at $1$ given in \eqref{eq:value1} yield
\begin{align}
\mathbb E|c_n|^{2N}
	&=\frac{\Gamma(1+N\gamma)^2}{\Gamma(1+\gamma)^{2N}}\sum_{\lambda}\frac{c_\lambda}{c'_\lambda}\frac{c_{\lambda+n}}{c'_{\lambda+n}} \frac{c^{'2}_{\lambda+n}}{[1+(N-1)\gamma]_{\lambda+n}^2}\frac{\left[N\gamma\right]_{\lambda}}{c_{\lambda}} \frac{\left[N\gamma\right]_{\lambda+n}}{c_{\lambda+n}}\nonumber\\	&=\frac{\Gamma(1+N\gamma)^2}{\Gamma(1+\gamma)^{2N}}\sum_{\lambda}\frac{c'_{\lambda+n}}{c'_{\lambda}}
	\frac{\left[N\gamma\right]_\lambda\left[N\gamma\right]_{\lambda+n}}{\left[1+(N-1)\gamma\right]_{\lambda+n}^2}.\label{eq:serie1}
\end{align}
\subsection*{Adding a rectangle.} The Young diagram of $\lambda+n$ is obtained from that of $\lambda$ by attaching a $N\times n$ rectangle on the left. This yields the explicit product formulas:
\begin{equation}\label{eq:rectc}
	c_{\lambda+n}=c_\lambda\,\prod_{i=1}^{N}\frac{\Gamma(\gamma(N-i+1)+\lambda_i+n)}{\Gamma(\gamma(N-i+1)+\lambda_i)},
\end{equation}
\begin{equation}\label{eq:rectc'}
	c'_{\lambda+n}=c'_\lambda\,\prod_{i=1}^{N}\frac{\Gamma(\gamma(N-i+1)+\lambda_i+n+1-\gamma)}{\Gamma(\gamma(N-i+1)+\lambda_i+1-\gamma)}.
\end{equation}
Moreover, the generalized Pochhammer symbol admits a representation in terms of $\Gamma$ functions
\begin{equation*}
	[b]^{(\gamma)}_{\lambda}
	=
	\prod_{i=1}^{N}\frac{\Gamma\left(b-(i-1)\gamma+\lambda_i\right)}{\Gamma\left(b-(i-1)\gamma\right)}.
\end{equation*}
Thus the second term in the series (\ref{eq:serie1}) is equal to
\begin{align}\label{eq:2ndterm}
	\frac{[N\gamma]_{\lambda}\,[N\gamma]_{\lambda+n}}{[1+(N-1)\gamma]^{2}_{\lambda+n}}
	&=
	\prod_{i=1}^{N}\frac{\Gamma((N-i+1)\gamma+\lambda_i)}{\Gamma((N-i+1)\gamma)}\;
	\prod_{i=1}^{N}\frac{\Gamma((N-i+1)\gamma+\lambda_i+n)}{\Gamma((N-i+1)\gamma)}\\
	&\;\times\prod_{i=1}^{N}\left(\frac{\Gamma(1+(N-i)\gamma+\lambda_i+n)}{\Gamma(1+(N-i)\gamma)}\right)^{-2}.\nonumber
\end{align}

Using \eqref{eq:rectc'} and \eqref{eq:2ndterm}, one gets
\begin{align*}
	\mathbb E|c_n|^{2N}
	&=\frac{\Gamma(1+N\gamma)^2}{\Gamma(1+\gamma)^{2N}}\sum_{\lambda}\frac{c'_{\lambda+n}}{c'_{\lambda}}\frac{\left[N\gamma\right]_\lambda\left[N\gamma\right]_{\lambda+n}}{\left[1+(N-1)\gamma\right]_{\lambda+n}^2}\\
	&=\frac{\Gamma(1+N\gamma)^2}{\Gamma(1+\gamma)^{2N}}\left(\frac{(N-1)!\, \gamma^{N-1}}{\Gamma(N\gamma)}\right)^2\\
	&\qquad \sum_{\lambda}\; \prod_{i=1}^{N}
	\frac{\G{(N-i+1)\gamma+\lambda_i}}
	{\G{(N-i+1)\gamma+\lambda_i+1-\gamma}}
	\;\prod_{i=1}^{N}
	\frac{\G{(N-i+1)\gamma+\lambda_i+n}}
	{\G{(N-i+1)\gamma+\lambda_i+n+1-\gamma}}\\
	&= \frac{1}{\Gamma(\gamma)^{2N}} \,(N!)^2\\
	&\qquad \sum_{\lambda}\; \prod_{i=1}^{N}
	\frac{\G{(N-i+1)\gamma+\lambda_i}}
	{\G{(N-i+1)\gamma+\lambda_i+1-\gamma}}
	\;\prod_{i=1}^{N}
	\frac{\G{(N-i+1)\gamma+\lambda_i+n}}
	{\G{(N-i+1)\gamma+\lambda_i+n+1-\gamma}},
\end{align*}
where we used $\Gamma(z+1) = z\Gamma(z)$ in the second line.

The end of this subsection is dedicated to the proof of \eqref{eq:doublesum}.
\begin{proof}
Let $x_i=r\e^{\ii \theta_i}$ and $y_j=r\e^{-\ii \theta_i'}$ with $r<1$. The absolute convergence of Stanley's Cauchy expansion in this case leads to
	\begin{align*}
		I_{r}(N,n)
		&\de \int_{[0,2\pi]^{2N}} \e^{\ii n\sum_i(\theta_i-\theta_i')}
		\prod_{i<j} |\e^{\ii\theta_i}-\e^{\ii\theta_j}|^{2\gamma}\,|\e^{\ii\theta'_i}-\e^{\ii\theta'_j}|^{2\gamma}\,
		\prod_{i,j}|1-r^2\e^{\ii\theta_i}\e^{-\ii\theta'_j}|^{-2\gamma}\,
		\dth\,\dthp\\
		&=\sum_{\lambda,\nu}\frac{c_\lambda}{c'_\lambda}\frac{c_\nu}{c'_\nu}
		\int_{[0,2\pi]^{2N}}
		\e^{\ii n\sum_i(\theta_i-\theta_i')}\,P_\lambda(r\e^{\ii\theta})\,P_\lambda(r\e^{-\ii\theta'})\;P_\nu(r\e^{-\ii\theta})\;P_\nu(r\e^{\ii\theta'})\\
		&\hspace{7cm}\times\prod_{ i<j} |\e^{\ii\theta_i}-\e^{\ii\theta_j}|^{2\gamma}\,|\e^{\ii\theta'_i}-\e^{\ii\theta'_j}|^{2\gamma}\,\dth\,\dthp.
	\end{align*}
	Now, since $P_\lambda$ is homogeneous of degree $|\lambda|$,
	\begin{align*}
		I_r(N,n)
		&=\sum_{\lambda,\nu}\frac{c_\lambda}{c'_\lambda}\frac{c_\nu}{c'_\nu} r^{2(|\lambda|+|\nu|)}
		\int_{[0,2\pi]^{2N}}
		\e^{\ii n\sum_i(\theta_i-\theta_i')}\,P_\lambda(\e^{\ii\theta})\,P_\lambda(\e^{-\ii\theta'})\;P_\nu(\e^{-\ii\theta})\;P_\nu(\e^{\ii\theta'})\\
		&\hspace{7cm}\times\prod_{ i<j} |\e^{\ii\theta_i}-\e^{\ii\theta_j}|^{2\gamma}\,|\e^{\ii\theta'_i}-\e^{\ii\theta'_j}|^{2\gamma}\,\dth\,\dthp\\
		&=\sum_{\lambda,\nu}\frac{c_\lambda}{c'_\lambda}\frac{c_\nu}{c'_\nu}\,r^{2(|\lambda|+|\nu|)}\,
		\Bigg|\int_{[0,2\pi]^N} P_{\lambda+n}(\e^{\ii\theta})\, P_\nu(\e^{-\ii\theta})
		\prod_{i<j} |\e^{\ii\theta_i}-\e^{\ii\theta_j}|^{2\gamma}\,\dth\Bigg|^2\\
		&=\sum_{\lambda}\frac{c_\lambda}{c'_\lambda}\frac{c_{\lambda+n}}{c'_{\lambda+n}}\,r^{2(|\lambda|+|\lambda+n|)}\,
		(2\pi)^{2N} \nor{P_{\lambda+n}}^4.
	\end{align*}
and then pass to the limit $r\uparrow1$, using dominated convergence for the integral and monotone convergence for the double series.
\end{proof}

\subsection{Asymptotics of the moments}
It will be more convenient to write the above summand by introducing
\begin{equation*}
	F(x)= \frac{\Gamma(x)}{\Gamma(x+1-\gamma)}, \quad
	G_\lambda(n) = \prod_{i=1}^{N} F\left((N-i+1)\gamma+\lambda_i\right) F\left((N-i+1)\gamma+\lambda_i+n\right),\quad
	S(n) = \sum_{\lambda} G_\lambda(n).
\end{equation*}
This yields the following expression for the moment:
\begin{equation*}\label{eqref:cn}
	\mathbb E|c_n|^{2N}=\frac{1}{\Gamma(\gamma)^{2N}}(N!)^2\,S(n).
\end{equation*}

\begin{prop}\label{pro:S}
	As $n$ tends to infinity,
	\begin{equation*}
		S(n)
		\sim
		\frac{1}{N!}\left(\int_{0}^{\infty}\frac{\dx}{x^{1-\gamma}\left(1+x\right)^{1-\gamma}}\right)^{N}\,
		\frac{1}{n^{N(1-2\gamma)}}.
	\end{equation*}
\end{prop}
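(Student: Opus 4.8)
The strategy is to recognise $n^{N(1-2\gamma)}S(n)$ as a Riemann sum over the ordered simplex $\{s_1\ge\cdots\ge s_N\ge 0\}$ and to pass to the limit by dominated convergence. Set $c_i\de(N-i+1)\gamma$, so that $c_i\in[\gamma,\infty)$ and $G_\lambda(n)=\prod_{i=1}^N F(c_i+\lambda_i)F(c_i+\lambda_i+n)$. Two elementary facts about $F$ are used throughout: by Stirling, $F(x)=\G x/\G{x+1-\gamma}=x^{-(1-\gamma)}(1+O(1/x))$ as $x\to\infty$; and, $F$ being continuous and strictly positive on $[\gamma,\infty)$, there is $C\ge 1$ with $C^{-1}x^{-(1-\gamma)}\le F(x)\le Cx^{-(1-\gamma)}$ for all $x\ge\gamma$ (in particular $S(n)$ is a series of positive terms). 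Finally, the integral
\[
I\de\int_0^\infty \frac{\dx}{x^{1-\gamma}(1+x)^{1-\gamma}}
\]
converges precisely because $\gamma\in(0,\tfrac12)$: near $0$ its integrand behaves like $x^{-(1-\gamma)}$ with $1-\gamma<1$, and near $\infty$ like $x^{-2(1-\gamma)}$ with $2(1-\gamma)>1$. The claimed asymptotic is equivalent to $n^{N(1-2\gamma)}S(n)\to I^N/N!$.

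Distributing the prefactor, $n^{N(1-2\gamma)}S(n)=\sum_{\lambda_1\ge\cdots\ge\lambda_N\ge 0}\prod_{i=1}^N\Phi_n^{(i)}(\lambda_i)$, where $\Phi_n^{(i)}(m)\de n^{1-2\gamma}F(c_i+m)F(c_i+m+n)$. The relevant scale for the parts is $\lambda_i\asymp n$: for fixed reals $s_1>\cdots>s_N>0$, Stirling gives
\[
\prod_{i=1}^N\Phi_n^{(i)}(\lfloor ns_i\rfloor)=n^{-N}\prod_{i=1}^N\bigl(s_i(s_i+1)\bigr)^{-(1-\gamma)}(1+o(1)),
\]
since $n^{1-2\gamma}(ns_i)^{-(1-\gamma)}(n(s_i+1))^{-(1-\gamma)}=n^{-1}(s_i(s_i+1))^{-(1-\gamma)}$ and the bounded shifts $c_i$ wash out at scale $n$. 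Hence $n^{N(1-2\gamma)}S(n)$ is, heuristically, the Riemann sum of $\prod_i(s_i(s_i+1))^{-(1-\gamma)}$ with mesh $1/n$ in each $s_i=\lambda_i/n$ over $\{s_1\ge\cdots\ge s_N\ge 0\}$; as that simplex is a fundamental domain for the $S_N$-action on $(0,\infty)^N$, its value equals $\frac1{N!}\int_{(0,\infty)^N}\prod_i(s_i(s_i+1))^{-(1-\gamma)}\,\mathrm{d}s=I^N/N!$.

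To turn this into a proof, fix a large $M\in\N$ and a small $\delta\in(0,1)$ and split the sum according to the sizes of the parts, using the two bounds (consequences of the estimate on $F$) $\Phi_n^{(i)}(m)\le Cn^{-\gamma}m^{-(1-\gamma)}$ for $1\le m\le n$ and $\Phi_n^{(i)}(m)\le Cn^{1-2\gamma}m^{-2(1-\gamma)}$ for $m>n$; in particular $\sum_{m\ge 0}\Phi_n^{(i)}(m)\le C_0$ uniformly in $n$, because $\sum_{1\le m\le n}m^{-(1-\gamma)}\asymp n^{\gamma}$ (here $\gamma>0$) and $\sum_{m>n}m^{-2(1-\gamma)}\asymp n^{1-2(1-\gamma)}$ (here $2(1-\gamma)>1$). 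The contribution of tuples with $\lambda_i\le M$ for some $i$ is $O_M(n^{-\gamma})\to 0$ (one factor is $\le Cn^{-\gamma}$, the rest uniformly summable); the contribution of tuples with all parts $>M$ but some $\lambda_i\le\delta n$ is $\le C\delta^{\gamma}$ uniformly in $n$ (since $\sum_{M\le m\le\delta n}n^{-\gamma}m^{-(1-\gamma)}\lesssim\delta^{\gamma}$); and on the remaining region $\{\lambda_i>\delta n\ \forall i\}$ the summand is dominated by $C^{2N}n^{-N}\prod_i(s_i(s_i+1))^{-(1-\gamma)}$ with $s_i=\lambda_i/n\ge\delta$, a dominating function integrable over $\{s_1\ge\cdots\ge s_N\ge\delta\}$ because $2(1-\gamma)>1$, so dominated convergence for Riemann sums shows this part tends to $\frac1{N!}\int_{[\delta,\infty)^N}\prod_i(s_i(s_i+1))^{-(1-\gamma)}\,\mathrm{d}s$. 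As all three parts are nonnegative, letting $n\to\infty$ and then $\delta\to 0$ (with $M$ fixed) squeezes $n^{N(1-2\gamma)}S(n)$ to $I^N/N!$. The main obstacle is precisely this uniform control of the boundary contribution (parts $\lambda_i$ small) and the tail contribution (parts $\lambda_i$ large): the former is handled because $\gamma>0$, the latter because $\gamma<\tfrac12$, which are exactly the conditions making $I$ finite.
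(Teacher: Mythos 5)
Your proposal is correct and follows essentially the same route as the paper: after rescaling by $n^{N(1-2\gamma)}$ one recognizes a Riemann sum of $\prod_i (s_i(1+s_i))^{-(1-\gamma)}$ over the ordered simplex, the region where some part is $\le \delta n$ is shown to contribute $O(\delta^\gamma)$ using $F(x)\le Cx^{\gamma-1}$ and the conditions $0<\gamma<\tfrac12$, and the ordering constraint produces the $1/N!$. The only organizational difference is that the paper isolates the diagonal terms $\lambda_i=\lambda_{i+1}$ as a separate negligible piece (and gets the lower bound in one line from Wendel's inequality), whereas you absorb the diagonal into the Riemann-sum limit on the main region; both are valid.
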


\begin{proof}

\emph{Lower bound.}
Using Wendel's inequality $F(x)>x^{\ga-1}$ valid for all $x>0$, one gets the lower bound
\begin{align*}
	S(n)
	&\ge \sum_{\lambda}\prod_{i=1}^{N}
	\frac{1}{\left(\left(N-i+1\right)\gamma+\lambda_i\right)^{1-\gamma}
	\left(\left(N-i+1\right)\gamma+\lambda_i+n\right)^{1-\gamma}}\\
	&\ge \sum_{\lambda_1\ge\cdots\ge\lambda_N\ge 0}
	\prod_{i=1}^{N}
	\frac{1}{\left(N\gamma+\lambda_i\right)^{1-\gamma}
	\left(N\gamma+\lambda_i+n\right)^{1-\gamma}}\\
	&\ge \frac{1}{N!}\sum_{\lambda_1,\ldots,\lambda_N\ge 0}
	\prod_{i=1}^{N}
	\frac{1}{\left(N\gamma+\lambda_i\right)^{1-\gamma}
	\left(N\gamma+\lambda_i+n\right)^{1-\gamma}}\\
	&= \frac{1}{N!}\;\left(\sum_{k=0}^{\infty}
	\frac{1}{\left(N\gamma+k\right)^{1-\gamma}\left(N\gamma+k+n\right)^{1-\gamma}}\right)^{N}.
\end{align*}
To analyze the series above, it is convenient to factor out the scaling in $n$ and then recognize a Riemann sum:
\begin{equation*}
	\sum_{k=0}^{\infty}\frac{1}{\left(N\gamma+k\right)^{1-\gamma}\left(N\gamma+k+n\right)^{1-\gamma}}
	= 
	\frac{1}{n^{1-2\gamma}}\;\frac{1}{n}\sum_{k=0}^{\infty}
	\frac{1}{\left(\frac{N\gamma+k}{n}\right)^{1-\gamma}\left(1+\frac{N\gamma+k}{n}\right)^{1-\gamma}}.
\end{equation*}
Since the function $x\mapsto x^{-(1-\gamma)}\left(1+x\right)^{-(1-\gamma)}$ is decreasing and integrable on $(0,\infty)$, a series-integral comparison yields
\begin{equation*}
	\sum_{k=0}^{\infty}\frac{1}{\left(N\gamma+k\right)^{1-\gamma}\left(N\gamma+k+n\right)^{1-\gamma}}
	\sim \frac{1}{n^{1-2\gamma}}\;\int_{0}^{\infty}\frac{\dx}{x^{1-\gamma}\left(1+x\right)^{1-\gamma}},\qquad\text{as }n\to\infty.
\end{equation*}
In particular, we obtain the lower bound
\begin{equation}\label{eq:liminf}
	\liminf_{n\to\infty} n^{N(1-2\gamma)}\,S(n)
	\ge
	\frac{1}{N!}\;\left(\int_{0}^{\infty}\frac{\dx}{x^{1-\gamma}\left(1+x\right)^{1-\gamma}}\right)^{N}.
\end{equation}

\emph{Upper bound.}
The upper bound needs a bit more care. Split the sum the following way
\begin{equation*}
	S(n)\;=\;S_{1}^\ep(n)\;+\;S_{2}^\ep(n)\;+\;S_{3}^\ep(n),
\end{equation*}
where $S_{1}^\ep(n)$ sums over strictly decreasing partitions with $\lambda_1>\cdots>\lambda_N>\ep n$, $S_{2}^\ep(n)$ collects those with at least one equality $\lambda_i=\lambda_{i+1}$ and $\lambda_N>\ep n$, and $S_3^\ep(n)$ sums over $\lambda$ such that $\lambda_N\leq \ep n$. We prove that $S_2^\ep(n)$ and $S_3^\ep(n)$ are negligible and $S_1^\ep(n)$ produces the correct leading order.

For the second term, note that $\lambda_{i}=\lambda_j$ implies that all the $\lambda_k$ between $i$ and $j$ are equal. If $\Lambda_i = \{\lambda \text{ such that } \lambda_i = \lambda_{i+1}\}$, one gets the following upper bound 
\begin{equation*}
	S_2^\ep(n) \leq \sum_{i=1}^{N-1} \sum_{\lambda\in\Lambda_i} G_{\lambda}(n).
\end{equation*}
The terms can be controlled using the elementary bound $F(x)\le C\,x^{\gamma-1}$, valid for $x\ge \gamma$:
\begin{align*}
	\sum_{\lambda\in\Lambda_i} G_{\lambda}(n)
	&\le C^{2N}\sum_{\lambda_1,\ldots,\lambda_i,\lambda_{i+2},\ldots,\lambda_N\ge 0}
	\left(\prod_{k\neq i,i+1}\frac{1}{\left(\gamma+\lambda_k\right)^{1-\gamma}\left(\gamma+\lambda_k+n\right)^{1-\gamma}}\right)
	\frac{1}{\left(\gamma+\lambda_i\right)^{2-2\gamma}\left(\gamma+\lambda_i+n\right)^{2-2\gamma}}\\
	&\le C^{2N}\frac{1}{n^{(N-2)(1-2\gamma)}}\left(\frac{1}{n}\sum_{k=0}^{\infty}
	\frac{1}{\left(\frac{\gamma+k}{n}\right)^{1-\gamma}\left(1+\frac{\gamma+k}{n}\right)^{1-\gamma}}\right)^{N-2}
	\sum_{k=0}^{\infty}\frac{1}{\left(\gamma+k\right)^{2-2\gamma}\left(\gamma+k+n\right)^{2-2\gamma}}.
\end{align*}
Since $\gamma<\frac{1}{2}$, the last series is 
\begin{equation*}
	\sum_{k=0}^{\infty}\frac{1}{\left(\gamma+k\right)^{2-2\gamma}\left(\gamma+k+n\right)^{2-2\gamma}} 
	\leq 
	\frac{1}{n^{2-2\ga}}\sum_{k=0}^{\infty}\frac{1}{\left(\gamma+k\right)^{2-2\gamma}}.
\end{equation*}
Hence the whole contribution is $o\left(n^{-N(1-2\gamma)}\right)$ as $n\to\infty$ and
summing over $i=1,\ldots,N-1$ yields $S_{2}^\ep(n)=o\left(n^{-N(1-2\gamma)}\right)$.
Using the same upper bound, one gets
\begin{align*}
	S_3^\varepsilon(n)
	&\le C^{2N}\sum_{\substack{\lambda_1\ge\cdots\ge \lambda_N\ge 0\\ \lambda_N<\varepsilon n}}
	\prod_{i=1}^{N}
	\frac{1}{\left(\gamma+\lambda_i\right)^{1-\gamma}\left(\gamma+\lambda_i+n\right)^{1-\gamma}}\\
	&\le C^{2N}\,\frac{1}{n^{N(1-2\gamma)}}\left(\frac{1}{n}\sum_{k=0}^{\infty}
	\frac{1}{\left(\frac{\gamma+k}{n}\right)^{1-\gamma}\left(1+\frac{\gamma+k}{n}\right)^{1-\gamma}}\right)^{N-1}
	\left(\frac{1}{n}\sum_{k=0}^{\lfloor \varepsilon n\rfloor}
	\frac{1}{\left(\frac{\gamma+k}{n}\right)^{1-\gamma}\left(1+\frac{\gamma+k}{n}\right)^{1-\gamma}}\right).\\
\end{align*}
Passing to the limit $n\to\infty$, the last factor converges to
$\int_{0}^{\varepsilon} x^{-(1-\gamma)}\left(1+x\right)^{-(1-\gamma)}\,\dx$, which vanishes as $\varepsilon\to0$.
Hence,
\begin{equation*}
	\lim\limits_{\ep\to0}\limsup\limits_{n\to\infty} n^{N(1-2\ga)} S_3^\ep(n)=0.
\end{equation*}

Turning to the main term $S_{1}^\ep(n)$, we use the existence of a constant $B(A)$ such that $F(x)\leq \frac{B(A)}{x^{1-\ga}}$ for $x>A$, with $B(A)\to 1$ as $A\to \infty$. Thus
\begin{align*}
	S_1^\ep(n)
	&\le B(\ep n)^{2N}\sum_{\lambda_1>\ldots>\lambda_N>\ep n}
	\prod_{i=1}^{N}
	\frac{1}{\left(\gamma+\lambda_i\right)^{1-\gamma}\left(\gamma+\lambda_i+n\right)^{1-\gamma}}\\
	&\le \frac{B(\ep n)^{2N}}{N!}\sum_{\lambda_1,\ldots,\lambda_N\geq 0}
	\prod_{i=1}^{N}
	\frac{1}{\left(\gamma+\lambda_i\right)^{1-\gamma}\left(\gamma+\lambda_i+n\right)^{1-\gamma}}\\
	&\le \frac{B(\ep n)^{2N}}{N!}\,\frac{1}{n^{N(1-2\gamma)}}\left(\frac{1}{n}\sum_{k=0}^{\infty}
	\frac{1}{\left(\frac{\gamma+k}{n}\right)^{1-\gamma}\left(1+\frac{\gamma+k}{n}\right)^{1-\gamma}}\right)^{N},
\end{align*}
which, as $n\to\infty$, is equivalent to
$\frac{1}{N!}\,n^{-N(1-2\gamma)}\left(\int_{0}^{\infty}x^{-(1-\gamma)}\left(1+x\right)^{-(1-\gamma)}\,\dx\right)^{N}$.
Combining the three pieces with \eqref{eq:liminf}, we find
\begin{equation}\label{eq:eq}
	\lim_{n\to\infty} n^{N(1-2\gamma)}\,S(n)
	=
	\frac{1}{N!}\left(\int_{0}^{\infty}\frac{\dx}{x^{1-\gamma}\left(1+x\right)^{1-\gamma}}\right)^{N}.
\end{equation}
\end{proof}

Applying the change of variables $t=\dfrac{x}{1+x}$ to \eqref{eq:eq} yields
\begin{equation*}
	\int_{0}^{\infty}\frac{\dx}{x^{1-\gamma}\left(1+x\right)^{1-\gamma}}
	=
	\int_{0}^{1} t^{\gamma-1}\left(1-t\right)^{-2\gamma}\,\dt
	=
	B\left(\gamma,\,1-2\gamma\right)
	=
	\frac{\Gamma\left(\gamma\right)\,\Gamma\left(1-2\gamma\right)}{\Gamma\left(1-\gamma\right)}.
\end{equation*}
Recalling that
\begin{equation*}
	\mathbb E|c_n|^{2N}
	=
	\frac{1}{\Gamma\left(\gamma\right)^{2N}}\left(N!\right)^{2}\,S(n),
\end{equation*}
we obtain the asymptotic
\begin{equation*}
	\mathbb E|c_n|^{2N}
	\;\sim\;
	\frac{N!\,C_{\gamma}(N)}{n^{N(1-2\gamma)}}
	\quad\text{with}\quad
	C_{\gamma}(N)
	=
	\frac{1}{\Gamma\left(\gamma\right)^{2N}}
	\left(\frac{\Gamma\left(\gamma\right)\,\Gamma\left(1-2\gamma\right)}{\Gamma\left(1-\gamma\right)}\right)^{N}.
\end{equation*}
Using Euler's reflection formula $\Gamma\left(z\right)\Gamma\left(1-z\right)=\dfrac{\pi}{\sin\left(\pi z\right)}$,
this constant can be rewritten as
\begin{equation*}
	C_{\gamma}(N)
	=
	\left(\frac{1}{\pi}\,\Gamma\left(1-2\gamma\right)\,\sin\left(\pi\gamma\right)\right)^{N}
	=
	\kappa(\beta)^{N}.
\end{equation*}
\begin{rem}
It is instructive to verify the case $N=1$ directly:
\begin{align*}
	\mathbb E |c_n|^{2}
	&= \frac{1}{(2\pi)^2}\int_{[0,2\pi]^{2}}
	\frac{\e^{\ii n\left(\theta-\theta'\right)}}
	{|\e^{\ii\theta}-\e^{\ii\theta'}|^{2\gamma}}\,
	\dth\,\dthp
	=  \frac{1}{2\pi}\int_{-\pi}^{\pi}\frac{\e^{\ii n u}}{\left|\e^{\ii u}-1\right|^{2\gamma}}\,\du\\
	&\sim\frac{1}{2\pi\, n^{1-2\gamma}}\int_{-\infty}^{\infty}\frac{\e^{\ii t}}{|t|^{2\gamma}}\,\dt
	=\frac{\G{1-2\gamma}\,\sin{\pi\gamma}}{\pi\,n^{1-2\gamma}},
\end{align*}
which matches the general expression above.
\end{rem}
Therefore, upon defining $Z_n\de n^{\frac12-\gamma}\,c_n$, we have established the convergence of even moments
\begin{equation}\label{convmoments}
	\lim_{n\to\infty}\mathbb E\left|Z_n\right|^{2N}\;=\;\mathbb E |\mathcal N_{\mathbb C}(0,\kappa(\beta))|^{2N},
\end{equation}
and, by isotropy of $c_n$, this proves the first part of Theorem \ref{thm:CLT}.

We now deduce Theorem \ref{thm:1}. Fix $\varepsilon>0$ and choose
$N$ large enough so that $2N\varepsilon>1$. By \eqref{convmoments}, there
exists a constant $C_N>0$ such that
\begin{equation*}
\mathbb E |c_n|^{2N}
\leq C_N \,n^{-N(1-\beta^2)}
\end{equation*}
for all $n\geq 1$. Hence
\begin{equation*}
\mathbb E\left[\sum_{n\geq 1}
\left( n^{\frac{1-\beta^2}{2}-\varepsilon}|c_n| \right)^{2N}
\right] \leq C_N \sum_{n\geq 1} n^{-2N\varepsilon}
<\infty.
\end{equation*}
Therefore the series inside the expectation is finite almost surely. In
particular, its general term converges to zero almost surely, and thus
\begin{equation*}
c_n
=
o\left(
\frac{1}{n^{\frac{1-\beta^2}{2}-\varepsilon}}
\right),
\qquad n\to\infty.
\end{equation*}
The same argument applies to the negative Fourier modes.

\subsection{Proof of the critical Sobolev regularity}
We now prove that almost surely $\M\notin H^{-\beta^2/2}(\mathbb T)$. One has $\norm{\M}^2_{H^{-\beta^2/2}(\T)}=\sum_{n\in\Z} (1+n^2)^{-\beta^2/2} |c_n|^2$. Therefore, if we set
\begin{equation*}
	Y_n \de n^{1-\beta^2} |c_n|^2, \qquad S_N \de \sum_{n=1}^{N} \frac{Y_n}{n}, \qquad S \de \sum_{n=1}^{\infty} \frac{Y_n}{n} \in [0, +\infty],
\end{equation*}
it suffices to show that $S = \infty$ almost surely.

By the convergence of moments obtained in the proof of Theorem \ref{thm:CLT}, both sequences $(\E{Y_n})_n$ and $(\E{Y_n^2})_n$ converge to finite positive limits, namely $\kappa(\beta)$ and $2\kappa(\beta)^2$. Combined with the Cauchy-Schwarz inequality $\E{Y_m Y_n} \leq \sqrt{\E{Y_m^2}\E{Y_n^2}}$, this gives, as $N \to +\infty$,
\begin{equation*}
	\E{S_N} \sim \kappa(\beta)\log N
	\qquad \text{and} \qquad
	\E{S_N^2} = \sum_{m,n=1}^{N} \frac{\E{Y_m Y_n}}{mn} = O\left( (\log N)^2 \right),
\end{equation*}
so that $\liminf_{N} \E{S_N}^2 / \E{S_N^2} > 0$. Paley-Zygmund's inequality applied to $S_N$ yields, for all $N$ large enough,
\begin{equation*}
	\P\left(S_N > \frac{1}{2} \E{S_N} \right) \geq \frac{1}{4} \, \frac{\E{S_N}^2}{\E{S_N^2}} \geq \alpha
\end{equation*}
for some constant $\alpha > 0$. Since $S_N$ is nondecreasing in $N$ and $\E{S_N} \to +\infty$, for every fixed $M > 0$ and all $N$ large enough,
\begin{equation*}
	\P(S > M) \geq \P(S_N > M) \geq \P\left( S_N > \frac{1}{2}\E{S_N} \right) \geq \alpha.
\end{equation*}
Letting $M \to +\infty$ yields 
\begin{equation}\label{eq:positiveproba}
	\P(S = \infty) \geq \alpha > 0.
\end{equation}

If $f \in C^\infty(\mathbb T)$, multiplication by $e^{\ii \beta f}$ is a bounded isomorphism of $H^{s}(\mathbb T)$, for every $s\in\R$. Therefore, taking $f = -X_K$ for $K\in\N$, we see that
\begin{equation*}
	\M \in H^{-\beta^2/2}(\mathbb T) \iff e^{-\ii\beta X_K}\,\M \in H^{-\beta^2/2}(\mathbb T).
\end{equation*}
But it follows from \cite[Example 2.9]{JunnilaSaksmanWebb20} that in $H^{-s}(\T)$, for $s>1/2$, $\M = \lim_{N\to\infty} e^{\ii\beta X_N + \frac{\beta^2}{2}\mathbb E[X_N^2]}$, in probability (in fact almost surely here due to the martingale structure). By continuity of multiplication by $\e^{-\ii\beta X_K}$ on $H^{-s}(\mathbb T)$, one sees that $e^{-\ii\beta X_K}\,\M = \lim_{N\to\infty} e^{\ii\beta (X_N-X_K) + \frac{\beta^2}{2}\mathbb E[X_N^2]}$. Noting that
\[
X_N-X_K = \sum_{n=K+1}^N \frac{A_n \cos(n\cdot) + B_n \sin(n\cdot)}{\sqrt{n}}
\]
as soon as $N\geq K$, one sees that the event
\begin{equation*}
	\mathcal{A} \de \{\M \in H^{-\beta^2/2}(\mathbb T)\}
\end{equation*}
does not depend on $(A_n, B_n)_{n \leq K}$ for every $K \geq 1$. Hence $\mathcal A$ belongs to the tail $\sigma$-algebra of the i.i.d. sequence $(A_n, B_n)_{n \geq 1}$. By Kolmogorov's 0-1 law, $\P(\mathcal A) \in \{0, 1\}$. Equation \eqref{eq:positiveproba} rules out $\P(\mathcal A) = 1$, so $\P(\mathcal A) = 0$, which concludes the proof.

\section{Robustness under smooth perturbations}\label{sec:robustness}

This section is devoted to the proof of Theorem \ref{thm:robustness}. The strategy is to reduce the analysis of the Fourier modes $\widehat{\M^g}(n)$ to those of $\M$ (controlled by Theorem \ref{thm:1}) through a factorization
\begin{equation*}
	\M^g = \Phi\cdot\M,
\end{equation*}
where $\Phi$ is a (random) function regular enough so that the above product makes sense and the decay of the coefficients of $\M$ is unaffected. The proof splits into three steps: first the factorization above (Lemma \ref{lem:factor}), a convolution transfer (Lemma \ref{lem:transfer}), and the verification that the perturbation produces a sufficiently regular $\Phi$ in each of the two regimes covered by the theorem.

\subsection{Sobolev preliminaries}\label{sub:sobprelim}

We collect here some facts about Sobolev spaces on $\T$ and $\T^2$ that will be used in the sequel. We refer to \cite[Chapter 3]{SchmeisserTriebel87} for proofs and further background. For $s\in\R$ and $d\in\{1,2\}$, $H^s(\T^d)$ is defined as the space of distributions $u$ such that
\begin{equation*}
	\norm{u}_{H^s(\T^d)}^2 \de \sum_{k\in\Z^d} (1+|k|^2)^s\,|\hat u(k)|^2<\infty,
\end{equation*}
with the convention $\widehat{u}(k)\de\frac{1}{(2\pi)^d}\int_{\T^d}e^{-\ii k\cdot \theta}u(\theta)\dth$.
	\subsubsection*{Properties for $s>1/2$.} One has a continuous embedding $H^s(\T)\hookrightarrow C(\T)$ and $H^s(\T)$ is a Banach algebra: there exists $C_s>0$ such that
	\begin{equation}\label{eq:algebra}
		\norm{uv}_{H^s(\T)} \leq C_s \norm{u}_{H^s(\T)}\,\norm{v}_{H^s(\T)}.
	\end{equation}
	Moreover, for any $C^\infty$ function $F\colon\R\to\C$, the map $u\mapsto F(u)$ sends $H^s(\T)$ into itself. In particular,
	\begin{equation}\label{eq:expSob}
		u\in H^s(\T) \Longrightarrow \e^{\ii u}\in H^s(\T).
	\end{equation}
	 Multiplication by an element of $H^s(\T)$ extends continuously to $H^{-s}(\T)$ by duality: for $\Phi\in H^s(\T)$ and $M\in H^{-s}(\T)$, $\Phi\,M\in H^{-s}(\T)$ is defined by
	\begin{equation*}
		\langle \Phi\cdot M,\varphi\rangle \de \langle M,\Phi\varphi\rangle, \qquad \varphi\in H^s(\T),
	\end{equation*}
	and
	\begin{equation}\label{eq:multcont}
		\norm{\Phi \cdot M}_{H^{-s}(\T)} \leq C_s \norm{\Phi}_{H^s(\T)}\,\norm{M}_{H^{-s}(\T)}.
	\end{equation}
	The Fourier coefficients of $\Phi \cdot M$ are given by the absolutely convergent convolution
	\begin{equation}\label{eq:fourierconv}
		\widehat{\Phi \cdot M}(n) = \sum_{k\in\Z}\widehat\Phi(k)\,\widehat M(n-k), \qquad n\in\Z.
	\end{equation}
	
	\subsubsection*{Restriction property.} 
	For $s>1$ and $u\in H^s(\T^2)\subset C(\T^2)$, the restriction $u_\Delta : \theta\mapsto u(\theta,\theta)$ satisfies
	\begin{equation}\label{eq:trace}
		\norm{u_\Delta}_{H^{s-1/2}(\T)} \leq C_s \norm{u}_{H^{s}(\T)}.
	\end{equation}

\subsection{Factorization}\label{sub:factor}

Let $g$ be a continuous function and $X^g$ be the log-correlated field with covariance
\begin{equation}\label{eq:covg}
	\E{X^g(\theta)X^g(\theta')}=\log\frac{1}{|\e^{\ii\theta}-\e^{\ii\theta'}|}+ g(\theta,\theta').
\end{equation}
Assume that one has $X^g=X+Y$, where  $X$ is the exactly log-correlated field and $Y$ is a continuous centered Gaussian process on $\T$ defined on the same probability space as $X$ (not necessarily independent of it).  Let $\varphi_\varepsilon = \frac{1}{\varepsilon}\varphi(\frac{\cdot}{\varepsilon})$ where $\varphi$ is a non-negative even smooth function with $\int\varphi=1$ and support in $[-1,1]$, and write $X^g_\varepsilon\de\varphi_\varepsilon\ast X^g$. We define
\begin{equation*}
	\M^{g,\varepsilon}(\theta) \de \exp\Big(\ii\beta X^g_\varepsilon(\theta)+\frac{\beta^2}{2}\E{X^g_\varepsilon(\theta)^2}\Big),
\end{equation*}
and similarly $\M^\varepsilon$ when $g= 0$.

\begin{lem}[Factorization]\label{lem:factor}
	Suppose that there exists $\delta>0$ such that
	\begin{enumerate}[label=(\roman*)]
		\item $g\in H^{1+\delta}(\T^2)$,
		\item $Y\in H^{1/2+\delta}(\T)$ almost surely.
	\end{enumerate}
	Then, almost surely,
	\begin{equation}\label{eq:factorization-general}
		\M^g=\e^{\frac{\beta^2}{2}\,g_\Delta} \e^{\ii\beta Y}\cdot \M,
	\end{equation}
	where the right-hand side is defined via the duality pairing of \eqref{eq:multcont} in $H^{1/2+\delta}(\T)\times H^{-1/2-\delta}(\T)$ and $g_\Delta(\theta) = g(\theta,\theta)$.
\end{lem}

\begin{proof}
	The covariance of the mollified field is given by
	\begin{equation}\label{eq:bilin}
		\E{X^g_\varepsilon(\theta)X^g_\varepsilon(\theta')} = \E{X_\varepsilon(\theta)X_\varepsilon(\theta')}+g_\varepsilon(\theta,\theta'),
	\end{equation}
	with $g_\varepsilon\de(\varphi_\varepsilon\otimes\varphi_\varepsilon)\ast g$. Specializing to $\theta=\theta'$ and rearranging the renormalization yields
	\begin{equation}\label{eq:Mgsplit}
		\M^{g,\varepsilon}(\theta) = \e^{\frac{\beta^2}{2}\,g_\varepsilon(\theta,\theta)} \e^{\ii\beta Y_\varepsilon(\theta)} \,\M^\varepsilon(\theta).
	\end{equation}
	By assumption (i) and \cite[Theorem 1.1]{JunnilaSaksmanWebb20}, for any sequence $\varepsilon_n\downarrow 0$, $\M^{g,\varepsilon_n}\to\M^g$ in probability in $H^{-s}(\T)$ for every $s>1/2$. The same statement applied to $g= 0$ gives $\M^{\varepsilon_n}\to\M$.
	
	It remains to identify the limit of the right-hand side of \eqref{eq:Mgsplit}. Since $g_\varepsilon\to g$ in $H^{1+\delta}(\T^2)$, the trace inequality \eqref{eq:trace} yields $g_\varepsilon(\theta,\theta)\to g(\theta,\theta)$ in $H^{1/2+\delta}(\T)$. Together with $Y_\varepsilon\to Y$ in $H^{1/2+\delta}(\T)$ a.s. and the algebra/stability properties \eqref{eq:algebra}-\eqref{eq:expSob}, the factor
	\begin{equation*}
		\e^{\frac{\beta^2}{2}\,g_\varepsilon(\theta,\theta)} \e^{\ii\beta Y_\varepsilon(\theta)} \rightarrow
		e^{\frac{\beta^2}{2}\,g_\Delta} \e^{\ii\beta Y}\qquad \text{ a.s. in } H^{1/2+\delta}(\T), \text{ as } \varepsilon\to0.
	\end{equation*}
	The continuous action \eqref{eq:multcont} then transfers this convergence to the product against $\M^\varepsilon$ in probability in $H^{-1/2-\delta}(\T)$. The two limits in \eqref{eq:Mgsplit} must agree, which gives \eqref{eq:factorization-general}.
\end{proof}

\subsection{Transfer by convolution}\label{sub:transfer}

The next lemma is purely a statement of Fourier analysis on $\T$: a polynomial decay of the Fourier coefficients of $\M$ is preserved, after multiplication by a function in $H^{1/2+\delta}(\T)$. It will be applied to $\Phi=\e^{\beta^2 g_\Delta/2}\,\e^{\ii\beta Y}$ from Lemma \ref{lem:factor}.

\begin{lem}\label{lem:transfer}
	Let $\delta>0$, $\alpha\in(0,1/2)$. Let $\Phi\in H^{1/2+\delta}(\T)$ and $(c_n)_{n\in\Z}$ satisfy $|c_n| = O (|n|^{-\alpha})$ as $|n|\to\infty$. Then the sequence
	\begin{equation*}
		\widetilde c_n \de \sum_{k\in\Z}\widehat\Phi(k)\,c_{n-k}, \qquad n\in\Z,
	\end{equation*}
	is well-defined and
	\begin{equation*}
		\widetilde c_n= O(|n|^{-\alpha}), \qquad \text{as } |n|\to\infty.
	\end{equation*}
\end{lem}

\begin{proof}
	By Cauchy–Schwarz,
	\begin{equation}\label{eq:phi-l1}
		\sum_{k\in\Z}|\widehat\Phi(k)| \leq \Big(\sum_{k\in\Z}(1+k^2)^{-1/2-\delta}\Big)^{1/2}\norm{\Phi}_{H^{1/2+\delta}(\T)} < \infty,
	\end{equation}
	and 
	\begin{equation}\label{eq:phi-S}
		\sum_{k\in\Z}|k|\,|\widehat\Phi(k)|^2 \leq\norm{\Phi}_{H^{1/2+\delta}(\T)}^2 <\infty.
	\end{equation}
	Fix $n\geq 1$ (the case $n\leq -1$ is symmetric) and split the series defining $\widetilde c_n$ at $|k|=n/2$. By \eqref{eq:phi-l1} and the assumption on $c_n$,
	\begin{equation*}
		\bigg|\sum_{|k|\leq n/2}\widehat\Phi(k)\,c_{n-k}\bigg| \leq 	\bigg(\sum_{k\in\Z}|\widehat\Phi(k)|\bigg) \, C\Big(\frac{n}{2}\Big)^{-\alpha} = O\left(n^{-\alpha}\right).
	\end{equation*}
	By Cauchy–Schwarz with weight $\sqrt{|k|}$ and \eqref{eq:phi-S},
	\begin{equation*}
		\bigg|\sum_{|k|>n/2}\widehat\Phi(k)\,c_{n-k}\bigg|^2 \,\leq\, \norm{\Phi}_{H^{1/2+\delta}(\T)}^2\cdot \sum_{|k|>n/2}\frac{|c_{n-k}|^2}{|k|}.
	\end{equation*}
	The remaining sum is split at $|k|=2n$. For $|k|>2n$, $|n-k|\geq |k|/2$, hence
	\begin{equation*}
		\sum_{|k|>2n}\frac{|c_{n-k}|^2}{|k|} \,\leq\, C^2\sum_{|k|>2n}|k|^{-2\alpha-1} = O\left(n^{-2\alpha}\right),
	\end{equation*}
	and for $n/2<|k|\leq 2n$,
	\begin{equation*}
		\sum_{n/2<|k|\leq 2n}\frac{|c_{n-k}|^2}{|k|} \,\leq\, \frac{2C^2}{n}\sum_{|m|\leq 3n}(1+|m|)^{-2\alpha} = O\left(n^{-2\alpha}\right),
	\end{equation*}
	since $\alpha<1/2$. Combining the two contributions gives $|\widetilde c_n|=O(n^{-\alpha})$, which concludes the proof.
\end{proof}

\subsection{Proof of Theorem \ref{thm:robustness}}\label{sub:proof-robust}

The upper bound $\dim_\F\M^g\leq 1-\beta^2$ follows from the regularity result of \cite{JunnilaSaksmanWebb20}, exactly as in the discussion after Theorem \ref{thm:1}. We focus on the lower bound. The structure of the argument is the same in both regimes: produce a coupling $X^g=X+Y$ with $Y\in H^{1/2+\delta}(\T)$ a.s. for some $\delta>0$, apply Lemmas \ref{lem:factor} and \ref{lem:transfer}.

We use the Fourier-series representation of $X$
\begin{equation}\label{eq:Xfourier}
	X = \lim_{N\to\infty} X_N,\qquad X_N\de \sum_{n=1}^N \frac{A_n\cos(n\cdot)+B_n\sin(n\cdot)}{\sqrt n},
\end{equation}
where $(A_n,B_n)_{n\geq 1}$ is an i.i.d. sequence of standard real Gaussians, the convergence taking place in $H^{-s}(\T)$ a.s. for any $s>0$.

\subsubsection*{Stationary case.}

We assume here that the covariance of the field is given by $-\log|e^{\ii\theta}-e^{\ii\theta'}|+g(\theta-\theta')$ where $g\in H^{1+\delta}(\T)$. By positive semidefiniteness, one has $\hat g(0)\geq 0$ and $\frac{1}{2|n|}+\hat g(n)\geq 0$ for $n\neq 0$. Let us define
\begin{equation*}
	Y(\theta) \de \sqrt{\hat g(0)} \xi + \sum_{n\geq 1} r_n\left(A_n\cos(n\theta)+B_n\sin(n\theta)\right),
\end{equation*}
where $\xi\sim\mathcal N(0,1)$ is independent of $(A_n,B_n)_{n\geq 1}$ and
\begin{equation*}
	r_n \de \sqrt{\frac{1}{n}+2\hat g(n)} - \sqrt{\frac{1}{n}} = \frac{2\hat g(n)}{\sqrt{\frac{1}{n}+2\hat g(n)}+\sqrt{\frac{1}{n}}}.
\end{equation*}
A direct computation shows that $X+Y$ has covariance $-\log|e^{\ii\theta}-e^{\ii\theta'}|+g(\theta-\theta')$, and is thus a realization of $X^g$. Since $r_n=O(\sqrt n\,|\hat g(n)|)$, there exists $C>0$ such that
\begin{equation*}
	\E{\norm{Y}_{H^{1/2+\delta}(\T)}^2} \leq C\bigg( \hat g(0) +\sum_{n\geq 1}(1+n^2)^{1+\delta}\,|\hat g(n)|^2\bigg) <\infty,
\end{equation*}
so $Y\in H^{1/2+\delta}(\T)$ a.s. and a Fourier computation shows that $(\theta,\theta')\mapsto g(\theta-\theta')$ is in $H^{1+\delta}(\T^2)$. Assumptions (i) and (ii) of Lemma \ref{lem:factor} are therefore satisfied, with the stated $\delta$. Note that in this case $g_\Delta(\theta)=g(0)$.

\subsubsection*{Non-stationary case.}
Following \cite[Section 3]{JSW20}, we use a spectral splitting. Let $T$ be the Hilbert-Schmidt operator on $L^2(\T)$ with kernel $g\in H^{3/2+\delta}(\T^2)$, and write $T = T^+ - T^-$  where $T^\pm \de \tfrac{1}{2}(|T| \pm T)$ are positive operators with kernels $g_\pm$. We claim that $g_\pm \in H^{3/2+\delta}(\T^2)$. The proof given here  is a transposition of \cite[Lemma 3.1]{JSW20} to $\T$.

Let $K \in L^2(\T^2)$ be real-valued and symmetric, with associated self-adjoint Hilbert-Schmidt operator $T$ on $L^2(\T)$, and denote by $K_T = K$ and $K_{|T|}$ the kernels of $T$ and $|T|$ respectively. We show that for every $s \geq 0$,
\begin{equation}\label{eq:abs-sobolev}
	\|K_{|T|}\|_{H^s(\T^2)} \asymp \|K_T\|_{H^s(\T^2)}.
\end{equation}
Writing $e_n(x) \de \e^{\ii n x}$, one has $\widehat{K_T}(m,n) = \tfrac{1}{2\pi}\widehat{T e_{-n}}(m)$, and similarly for  $K_{|T|}$. Parseval's identity then yields, for every $n \in \Z$,
\[
\sum_{m \in \Z} \bigl|\widehat{K_T}(m,n)\bigr|^2 
= \frac{1}{(2\pi)^3} \bigl\langle T^2\, e_{-n}, e_{-n}\bigr\rangle
= \frac{1}{(2\pi)^3} \bigl\langle |T|^2 e_{-n}, e_{-n}\bigr\rangle
= \sum_{m \in \Z} \bigl|\widehat{K_{|T|}}(m,n)\bigr|^2,
\]
where the middle equality uses $|T|^2 = T^2$. The same holds with $m,n$ swapped by symmetry. The equivalence \eqref{eq:abs-sobolev} now follows from $(1+m^2+n^2)^s \asymp 1 + |m|^{2s} + |n|^{2s}$. Applying this with $K_T = g$ gives $g_\pm = \tfrac{1}{2}(K_{|T|} \pm g) \in H^{3/2+\delta}(\T^2)$.

By Sobolev embedding, $g_\pm$ are Hölder-continuous positive definite kernels on $\T^2$, hence covariances of centered Gaussian fields $G^\pm$ on $\T$ with Hölder-continuous realisations. Following the end of the  proof of \cite[Theorem A]{JSW20} adapted to $\T$, we may construct on a common probability space a standard log-correlated field $X$ on $\T$ and copies of $G^\pm$ such that
\begin{equation*}
X^g = X + Y, \qquad Y \de G^+ - G^-.
\end{equation*}
We claim that $G^\pm\in H^{1/2+\delta_0}(\T)$ a.s. for every $\delta_0<\delta/2$. Indeed, one has
\begin{equation*}
	\E{|\widehat{G^{\pm}}(n)|^2}=\widehat{g_{\pm}}(n,-n),\qquad n\in\Z,
\end{equation*}
so that, by Cauchy–Schwarz, for any $s\in\R$ and $\tau>2s+1/2$,
\begin{align*}
	\E{\norm{G^{\pm}}_{H^s(\T)}^2}
	&=\sum_{n\in\Z}(1+n^2)^s\,\widehat{ g_{\pm}}(n,-n)\\
	&\leq C\bigg(\sum_{n\in\Z}(1+n^2)^{2s-\tau}\bigg)^{\frac12}\bigg(\sum_{n\in\Z} (1+n^2)^\tau \,\widehat{g_{\pm}}(n,-n)^2\bigg)^{\frac12}\\
	&\leq C\bigg(\sum_{n\in\Z}(1+n^2)^{2s-\tau}\bigg)^{\frac12}\,\norm{g_{\pm}}_{H^\tau(\T^2)}.
\end{align*}
This gives $\E{\norm{G^\pm}_{H^{1/2+\delta_0}(\T)}^2}<\infty$ for every $\delta_0<\delta/2$, hence $Y\in H^{1/2+\delta_0}(\T)$ a.s.

Therefore, assumptions (i) and (ii) of Lemma \ref{lem:factor} are satisfied, with $\delta$ replaced by $\delta_0$.

\subsubsection*{Conclusion.}
In each case, Lemma \ref{lem:factor} provides $\delta_0>0$ such that
\begin{equation}\label{eq:factor-final}
	\M^g =\Phi\cdot\M\quad\text{a.s.},\qquad
	\Phi \de\e^{\frac{\beta^2}{2} g_\Delta}\,\e^{\ii\beta Y}\in H^{1/2+\delta_0}(\T) \text{ a.s.}
\end{equation}
By \eqref{eq:fourierconv} and \eqref{eq:factor-final},
\begin{equation*}
	\widehat{\M^g}(n)=\sum_{k\in\Z}\widehat\Phi(k)\,c_{n-k},\qquad n\in\Z,
\end{equation*}
where $c_n=\widehat\M(n)$. By Theorem \ref{thm:1}, $|c_n| = O\big(|n|^{-(1-\beta^2)/2+\eta}\big)$ a.s. for every $\eta>0$. Lemma \ref{lem:transfer} then gives, for every $\eta>0$,
\begin{equation*}
	\Big|\widehat{\M^g}(n)\Big|= O\left(|n|^{-(1-\beta^2)/2+\eta}\right)\qquad\text{a.s.},
\end{equation*}
which concludes the proof of the lower bound. \qed

\section{Convergence of the process}\label{sec:mom2}
We now turn to the study of mixed moments of the Fourier coefficients
\begin{equation*}
	M_n(\ell,m)\de \E{\prod_{j=0}^k c_{n+j}^{\ell_j}\,\overline{c_{n+j}}^{m_j}}.
\end{equation*}
We first treat the modulus case and then address the general mixed moment in Section \ref{sub:mix}.
This analysis requires additional properties of Jack polynomials, which we now collect.

\subsection{Preliminary results}
\subsection*{Pieri formula.}
The Pieri formula gives the explicit decomposition of the product $e_p P_\mu$ on the Jack polynomial basis, where $e_p$ is  the elementary symmetric polynomial of degree $p$. It takes the following form
\begin{equation}\label{eq:Pieriformula}
	e_{p}\,P_{\mu}=\sum_{\tau\;:\; \tau/\mu\; p\text{-vertical strip}}\psi'_{\tau/\mu}\,P_{\tau}.
\end{equation}
We refer to Appendix \ref{partition} for the definitions of skew diagrams and vertical strips.
If $\tau$ and $\mu$ are partitions such that $\tau\supset\mu$, let $C_{\tau/\mu}$ (resp. $R_{\tau/\mu}$) denote the union of the columns (resp. rows) that intersect $\tau-\mu$. Thus $C_{\tau/\mu}$ consists of all boxes of $\tau$ lying in a column in which a box has been added, while $R_{\tau/\mu}$ consists of all boxes of
$\tau$ lying in a row in which a box has been added. The coefficient $\psi'_{\tau/\mu}$, which we call the \emph{Pieri coefficient}, is given by
\begin{equation}\label{eq:psi'}
	\psi'_{\tau/\mu}\,= \prod_{s\in C_{\tau/\mu}- R_{\tau/\mu}} \frac{b_\tau(s)}{b_\mu(s)},
\end{equation}
where
\begin{equation}\label{eq:blambda}
	b_\lambda(s) = \frac{c_\lambda(s)}{c'_\lambda(s)} = \frac{a_\lambda(s)+\gamma\,l_\lambda(s)+\gamma}{a_\lambda(s)+\gamma\,l_\lambda(s)+1}.
\end{equation}
There are in fact four Pieri formulas but only this one will be needed below. For more details on those formulas, one can refer to \cite[p340]{Macdonald95}.

\begin{defi}
	If $\lambda$ is a partition, let $\Delta\lambda_i=\lambda_i-\lambda_{i+1}$, with the convention $\lambda_{N+1}=0$. The gap of the partition is defined as
	\begin{equation*}
		\mathrm{gap}(\lambda) = \min_{i\leq N} \Delta\lambda_i.
	\end{equation*}
\end{defi}

\begin{defi}
	For $\ell_1,\cdots,\ell_k\in \mathbb N$, an \emph{$(\ell_1,\dots,\ell_k)$-shape} of height $N$ is a vector
	$\sigma=(\sigma_1,\dots,\sigma_N)\in\{0,1,\dots,k\}^N$ such that
	\begin{equation*}
		|\{i:\;\sigma_i=r\}|=\ell_r,\qquad r=1,\dots,k.
	\end{equation*}
	Given such $\sigma$ and a partition $\lambda$
	satisfying $\mathrm{gap}(\lambda)\geq k$, we
	define a new partition $\nu=\lambda+ \sigma$ by
	\begin{equation*}
		\nu_i \de \lambda_i + \sigma_i,\qquad i=1,\dots,N.
	\end{equation*}
	In this case, we say that the skew diagram $\nu/\lambda$
	has shape $\sigma$. We also say that $\nu$ is obtained from $\lambda$ by adding an $(\ell_1,\dots,\ell_k)$-shape if
	$\nu=\lambda+\sigma$ for some $(\ell_1,\dots,\ell_k)$-shape $\sigma$. Let us denote $|\sigma| = \sum_{i=1}^N \sigma_i$ and
	\begin{equation}
		\mathcal A_\ell = \left\{\sigma\in\{0,1,\dots,k\}^N\; : \; \sigma \text{ is an }\ell\text{-shape}\right\}.
	\end{equation}
\end{defi}

\begin{figure}[H]
	\centering
	\subfigure{
		\begin{tikzpicture}[scale=0.45]
			\begin{scope}
				\foreach \row/\len in {0/15,1/13,2/10,3/8,4/5,5/3,6/1}{
					\foreach \x in {0,...,\numexpr\len-1}{
						\draw[thick] (\x,-\row) rectangle ++(1,-1);
					}
				}
				\node at (5.5,1) {\large $\lambda$};
			\end{scope}
			
			\draw[->,thick] (13.8,-3.5) -- (16.2,-3.5);
			\node at (15,-2.8) {$+\,\sigma\,$};

			\begin{scope}[shift={(17,0)}]
				
				\foreach \row/\len in {0/15,1/13,2/10,3/8,4/5,5/3,6/1}{
					\foreach \x in {0,...,\numexpr\len-1}{
						\draw[thick] (\x,-\row) rectangle ++(1,-1);
					}
				}
				
				\foreach \coord in {(15,0)}{
					\fill[blue!25] \coord rectangle ++(1,-1);
					\draw[thick] \coord rectangle ++(1,-1);
				}
				\foreach \coord in {(13,-1)}{
					\fill[blue!70] \coord rectangle ++(1,-1);
					\draw[thick] \coord rectangle ++(1,-1);
				}
				\foreach \coord in {(14,-1)}{
					\fill[blue!70] \coord rectangle ++(1,-1);
					\draw[thick] \coord rectangle ++(1,-1);
				}

				\foreach \coord in {(8,-3)}{
					\fill[blue!25] \coord rectangle ++(1,-1);
					\draw[thick] \coord rectangle ++(1,-1);
				}

				\foreach \coord in {(3,-5)}{
					\fill[blue!25] \coord rectangle ++(1,-1);
					\draw[thick] \coord rectangle ++(1,-1);
				}
				
				\foreach \coord in {(10,-2),(11,-2)}{
					\fill[blue!70] \coord rectangle ++(1,-1);
					\draw[thick] \coord rectangle ++(1,-1);
				}

				\node at (6.5,1) {\large $\nu$};

			\end{scope}
			
		\end{tikzpicture}
	}
	
	\caption{An example of partition $\nu=\lambda + \sigma$ obtained by adding $\sigma$ of shape $\ell_1=3$ and $\ell_2=2$.}
	\label{fig:+s}
\end{figure}
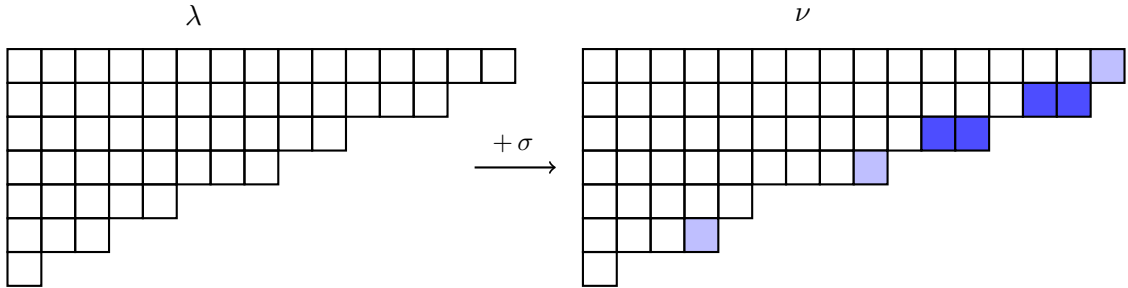
The two following lemmas give the estimates of the Pieri coefficients in the regime where the spacings between the parts of $\lambda$ become large
\begin{equation}
	\mathrm{gap}(\lambda)\longrightarrow +\infty.
\end{equation}
 We call this regime the \emph{large gap regime}.
\begin{lem}\label{lem:pierito1} If $\sigma$ is a vertical strip, the Pieri coefficients satisfy the following properties:
	\begin{equation*}
		\psi'_{\lambda+n+\sigma/\lambda+n}=\psi'_{\lambda+\sigma/\lambda},
	\end{equation*}
	and
	\begin{equation*}
		\psi'_{\lambda+\sigma/\lambda}=1+O\left(\frac{1}{\mathrm{gap}(\lambda)}\right).
	\end{equation*}
\end{lem}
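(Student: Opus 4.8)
The plan is to read off both claims directly from the product formula \eqref{eq:psi'} for $\psi'_{\tau/\mu}$ together with the definition \eqref{eq:blambda} of $b_\lambda(s)$, tracking how the arm- and leg-lengths behave under the two operations: shifting $\lambda\mapsto\lambda+n$ and adding a shape $\sigma$. First I would record the combinatorial description of the skew diagram $(\lambda+\sigma)/\lambda$: since $\mathrm{gap}(\lambda)\ge k$ and $\sigma\in\{0,1,\dots,k\}^N$, the added boxes in row $i$ form a horizontal strip of length $\sigma_i$ at the end of that row, and — crucially — because the gap condition guarantees $\lambda_i+\sigma_i\le\lambda_{i-1}$, no two added boxes lie in the same column. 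Hence $(\lambda+\sigma)/\lambda$ is a \emph{vertical strip} in the transpose sense assumed by the Pieri formula \eqref{eq:Pieriformula}, and the set $C_{\tau/\mu}-R_{\tau/\mu}$ over which the product runs consists of boxes $s$ in columns that meet the strip but in rows that do \emph{not}, i.e. boxes of $\lambda$ sitting strictly above an added box in the same column.

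For the first identity, I would observe that the arm-length $a_\lambda(s)$, the leg-length $l_\lambda(s)$, the column set $C_{\tau/\mu}$ and the row set $R_{\tau/\mu}$ are all invariant under the global shift $\lambda\mapsto\lambda+n$: adding an $N\times n$ rectangle on the left of the Young diagram (as described around \eqref{eq:rectc}) translates every box horizontally by $n$ but changes neither arm- nor leg-lengths of any box, and likewise translates the relevant column indices by $n$ while preserving the incidence pattern ``box of $\lambda$ lying above an added box''. Therefore each factor $b_\tau(s)/b_\mu(s)$ in \eqref{eq:psi'} is unchanged, giving $\psi'_{\lambda+n+\sigma/\lambda+n}=\psi'_{\lambda+\sigma/\lambda}$ term by term. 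One should just be a little careful that the shape $\sigma$ being added is literally the same in both cases, which it is since $\mathrm{gap}(\lambda+n)=\mathrm{gap}(\lambda)\ge k$.

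For the estimate, fix a box $s=(i,j)$ appearing in the product, so $j\le\lambda_i$ and there is an added box $(i',j)$ with $i'>i$ and $j=\lambda_{i'}+1\le\lambda_{i'}+\sigma_{i'}$; in particular $\sigma_{i'}\ge 1$ and $j$ is within $k$ of $\lambda_{i'}$. The passage from $\mu=\lambda$ to $\tau=\lambda+\sigma$ changes the leg-length $l(s)$ by at most $1$ (only row $i'$ can have its column-$j$ content toggled, and rows strictly between contribute nothing new since the strip has no two boxes in a column) and leaves the arm-length $a(s)$ changed by $O(k)$. Now $a_\lambda(s)=\lambda_i-j$, and since $j\le\lambda_{i'}+k$ with $i'>i$, we have $a_\lambda(s)\ge \lambda_i-\lambda_{i+1}-k\ge\mathrm{gap}(\lambda)-k\to\infty$. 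Plugging into \eqref{eq:blambda}, both $b_\tau(s)$ and $b_\mu(s)$ equal $1+O(1/a(s))=1+O(1/\mathrm{gap}(\lambda))$ uniformly over the at most $N$ boxes in the product (the number of boxes in $C_{\tau/\mu}-R_{\tau/\mu}$ is bounded by the number of rows $N$), so the ratio, and hence the finite product, is $1+O(1/\mathrm{gap}(\lambda))$. The main point requiring care is the uniformity: one must check that the implied constant depends only on $N$ and $k$ (not on $\lambda$ itself), which follows because every arm-length entering the bound is at least $\mathrm{gap}(\lambda)-k$ and there are at most $N$ factors; this is the only genuinely technical step, the rest being bookkeeping on Young diagrams.
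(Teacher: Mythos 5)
Your proof is correct and follows essentially the same route as the paper: the key estimate is obtained exactly as in the text, by noting that every box $s\in C_{\tau/\mu}-R_{\tau/\mu}$ has $a_\lambda(s)\ge \mathrm{gap}(\lambda)-k$, so each factor $b_\tau(s)/b_\mu(s)$ is $1+O(1/\mathrm{gap}(\lambda))$ and the product has a bounded number of terms (the paper bounds it by $(N-1)|\sigma|$ rather than $N$, but only boundedness in terms of $N,k$ matters), while for the first identity you verify the invariance of arms, legs and incidence under the left shift directly where the paper simply invokes the shift property \eqref{eq:shift}. One terminological slip to fix: the condition ``at most one added box per column,'' which the gap hypothesis guarantees and which you correctly use to control the leg-lengths, makes $(\lambda+\sigma)/\lambda$ a \emph{horizontal} strip in the paper's conventions, not a vertical one (a vertical strip has at most one cell per row, which for $e_p$ is automatic from $\sigma\in\{0,1\}^N$); this does not affect the validity of your estimate.
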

\begin{proof}
	The first equality is a consequence of the shift property \eqref{eq:shift}. For the second one, we use the formulas \eqref{eq:psi'} and \eqref{eq:blambda} to get
	\begin{equation*}
		\psi'_{\lambda+\sigma/\lambda}\,
		=
		\prod_{s\in C_{\lambda+\sigma/\lambda}- R_{\lambda+\sigma/\lambda}}  \frac{c_{\lambda+\sigma}(s)}{c'_{\lambda+\sigma}(s)}\, \frac{c'_\lambda(s)}{c_\lambda(s)}.
	\end{equation*}
	First, note that the product has at most $(N-1)|\sigma|$ terms. So at fixed shape $\sigma$, it is sufficient to prove that each term is $1+O\left(\frac{1}{\mathrm{gap}(\lambda)}\right)$. And
	\begin{align*}
		\frac{c_{\lambda+\sigma}(s)}{c'_{\lambda+\sigma}(s)}\, \frac{c'_\lambda(s)}{c_\lambda(s)}
		&=\frac{a_{\lambda+\sigma}(s)+\gamma l_{\lambda+\sigma}(s)+\gamma}{a_{\lambda+\sigma}(s)+\gamma l_{\lambda+\sigma}(s)+1}
		\; \frac{a_{\lambda}(s)+\gamma l_{\lambda}(s)+1}{a_{\lambda}(s)+\gamma l_{\lambda}(s)+\gamma}\\
		&=\frac{a_{\lambda}(s)+\gamma( l_{\lambda}(s)+1)+\gamma}{a_{\lambda}(s)+\gamma (l_{\lambda}(s)+1)+1}
		\; \frac{a_{\lambda}(s)+\gamma l_{\lambda}(s)+1}{a_{\lambda}(s)+\gamma l_{\lambda}(s)+\gamma}\\
		&=\left(1-\frac{1-\gamma}{a_{\lambda}(s)+\gamma (l_{\lambda}(s)+1)+1}\right)\left(1+\frac{1-\gamma}{a_{\lambda}(s)+\gamma l_{\lambda}(s)+\gamma}\right)\\
		&= 1+O\left(\frac{1}{\mathrm{gap}(\lambda)}\right).
	\end{align*}
\end{proof}
Similarly, in the large gap regime, one has the following equivalent for ratios of $c$'s.
\begin{lem}\label{lem:ratio}
	Let $\sigma$ be a fixed shape, then we have
	\begin{equation*}
		\frac{c'_{\lambda+\sigma}}{c'_{\lambda}}\,\frac{c_{\lambda}}{c_{\lambda+\sigma}}=1+O\left(\frac{1}{\mathrm{gap}(\lambda)}\right).
	\end{equation*}
\end{lem}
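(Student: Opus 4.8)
The plan is to reduce Lemma \ref{lem:ratio} to Lemma \ref{lem:pierito1}. Recall that $c_\lambda=\prod_{s\in\lambda}c_\lambda(s)$ and $c'_\lambda=\prod_{s\in\lambda}c'_\lambda(s)$, so that the ratio in question factors over the boxes of the Young diagram of $\lambda+\sigma$:
\begin{equation*}
	\frac{c'_{\lambda+\sigma}}{c'_{\lambda}}\,\frac{c_{\lambda}}{c_{\lambda+\sigma}}
	=
	\prod_{s\in\lambda+\sigma}\frac{c'_{\lambda+\sigma}(s)}{c'_{\lambda}(s)}\,\frac{c_{\lambda}(s)}{c_{\lambda+\sigma}(s)},
\end{equation*}
with the convention that $c_\lambda(s)=c'_\lambda(s)=1$ for $s\in(\lambda+\sigma)/\lambda$ (the boxes not in $\lambda$). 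The first observation is that only boxes $s$ whose arm-length or leg-length in $\lambda$ differs from that in $\lambda+\sigma$ contribute a factor different from $1$. Since $\sigma$ has all parts equal to $0$ or at most a constant depending only on $k$, adding $\sigma$ changes $a_\lambda(s)$ and $l_\lambda(s)$ only for boxes lying in the finitely many columns and rows meeting the added cells; in particular the number of contributing boxes is bounded by $C(N,\sigma)$ uniformly in $\lambda$.

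Next I would estimate each contributing factor. For a box $s$ in row $i$, adding $\sigma$ to $\lambda$ changes $a_\lambda(s)$ by $\sigma_{i'}$ for the relevant column index $i'$ and $l_\lambda(s)$ by at most $\max_j\sigma_j$; crucially, in the large gap regime both $a_\lambda(s)$ and $l_\lambda(s)\cdot\mathrm{gap}(\lambda)$ grow (any box with a nonzero leg-length has leg reaching across a gap, hence $\gamma l_\lambda(s)\to\infty$, while boxes with zero leg-length and bounded arm lie in a bounded region near the added strip and can be handled directly). Writing the factor as in the computation in the proof of Lemma \ref{lem:pierito1}, namely a ratio of the form $\frac{a+\gamma l+\gamma}{a+\gamma l+1}$ before and after the shift, each such ratio is $1+O(1/(a_\lambda(s)+\gamma l_\lambda(s)))$, and $a_\lambda(s)+\gamma l_\lambda(s)\gtrsim \mathrm{gap}(\lambda)$ for every box whose arm- or leg-length is affected. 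Multiplying the bounded number of factors of the form $1+O(1/\mathrm{gap}(\lambda))$ gives the claim.

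Alternatively — and this is probably the cleanest route — one can avoid re-doing the box-by-box analysis by invoking the Pieri formula structure already encoded in Lemma \ref{lem:pierito1}. Indeed, iterating the vertical-strip Pieri formula \eqref{eq:Pieriformula} (or using the analogous horizontal-strip version) lets one express the passage $\lambda\rightsquigarrow\lambda+\sigma$ through a bounded number of single-box additions, and the Pieri coefficient $\psi'_{\tau/\mu}$ is, up to the explicit combinatorial factor $\frac{c_\tau}{c'_\tau}\frac{c'_\mu}{c_\mu}$ times the norm/value-at-$1$ normalizations, exactly the ratio of $c$'s and $c'$'s one wants to control; since Lemma \ref{lem:pierito1} already shows $\psi'_{\lambda+\sigma/\lambda}=1+O(1/\mathrm{gap}(\lambda))$, the ratio $\frac{c'_{\lambda+\sigma}}{c'_\lambda}\frac{c_\lambda}{c_{\lambda+\sigma}}$ inherits the same estimate after tracking the remaining explicit $\Gamma$-factors, all of which are themselves $1+O(1/\mathrm{gap}(\lambda))$ in this regime.

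The main obstacle is the bookkeeping of exactly which boxes have their arm- or leg-length modified by adding $\sigma$, and checking uniformity in $\lambda$: one must be careful that a box with small arm-length but nonzero leg-length still contributes only $O(1/\mathrm{gap}(\lambda))$ because its leg necessarily jumps across a gap, so $\gamma l_\lambda(s)$ is large. Once this uniform lower bound $a_\lambda(s)+\gamma l_\lambda(s)\gtrsim\mathrm{gap}(\lambda)$ on all affected boxes is established, the rest is the same telescoping estimate as in Lemma \ref{lem:pierito1}.
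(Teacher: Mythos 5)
There is a genuine gap in the key estimate. Your argument rests on the claim that $a_\lambda(s)+\gamma l_\lambda(s)\gtrsim\mathrm{gap}(\lambda)$ for every box whose arm- or leg-length is affected, so that each factor is individually $1+O(1/\mathrm{gap}(\lambda))$. This is false. When a cell is added at the end of row $i$, every box of row $i$ has its arm increased by $1$, and the boxes near the right end of that row (columns $j$ with $\lambda_{i+1}<j\le\lambda_i$) have leg-length $0$ and arm-length as small as $0$; for instance the box $(i,\lambda_i)$ has $c_\lambda(s)=\gamma$ and contributes the factor $\frac{c_{\lambda+\sigma}(s)}{c_\lambda(s)}=\frac{1+\gamma}{\gamma}$, which is bounded away from $1$ uniformly in $\lambda$. (Your parenthetical that nonzero leg-length forces $\gamma l_\lambda(s)\to\infty$ is also a misreading: leg-lengths count rows and are bounded by $N-1$; it is the \emph{arms} of boxes in the affected \emph{columns} that are $\gtrsim\mathrm{gap}(\lambda)$.) These individually large row factors only become harmless after two cancellations that your proof never performs: the product over a maximal run of row boxes with constant leg-length telescopes, and the resulting boundary terms cancel between the $c$- and $c'$-products (e.g.\ for the leg-$0$ run one gets $\frac{\Delta\lambda_i+\gamma}{\gamma}$ against $\Delta\lambda_i+1$, and the leftover $\gamma$ is absorbed by the new box's own contribution $c_{\lambda+\sigma}(s_0)=\gamma$, $c'_{\lambda+\sigma}(s_0)=1$). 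This telescoping over the modified row is precisely the content of the paper's proof and is the part you defer to "can be handled directly."

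Your proposed "cleanest route" via Lemma \ref{lem:pierito1} does not close this gap either: the Pieri coefficient $\psi'_{\tau/\mu}$ is a product over $C_{\tau/\mu}-R_{\tau/\mu}$, i.e.\ it \emph{excludes} the row boxes by definition, which is exactly why that lemma is easy (all remaining boxes have arm $\gtrsim\mathrm{gap}(\lambda)$). The quantity in Lemma \ref{lem:ratio} is a product over \emph{all} boxes, and the discrepancy between the two is the row contribution — the hard part. Asserting that "the remaining explicit $\Gamma$-factors are themselves $1+O(1/\mathrm{gap}(\lambda))$" is exactly the unproved claim. The correct fix is the paper's: reduce to a single added box, isolate the row product, split it along runs of constant leg-length, and verify the telescoped ratio of $c$ and $c'$ row products equals $1+O(1/\mathrm{gap}(\lambda))$.
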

\begin{proof}
	By decomposing the shape $0\subset \sigma^{(1)}\subset\cdots\subset\sigma^{(|\sigma|)}=\sigma$, one sees that it is sufficient to prove the result for a shape $\sigma$ of size $1$, i.e. just adding one square to row $i\in[N]$. In this particular case, the only affected cells in the new coefficient $c_{\lambda+\sigma}$ are those in $C_{\lambda+\sigma/\lambda}\cup R_{\lambda+\sigma/\lambda}$, which are those in row $i$ and above the extra cell. Consequently,
	\begin{align*}
		\frac{c_{\lambda+\sigma}}{c_{\lambda}}
		&=  \prod_{s\in C_{\lambda+\sigma/\lambda}\cup R_{\lambda+\sigma/\lambda}}  \frac{c_{\lambda+\sigma}(s)}{c_{\lambda}(s)} \\
		&= \ga\; \prod_{s\in C_{\lambda+\sigma/\lambda} - \{\sigma\}}  \frac{c_{\lambda+\sigma}(s)}{c_{\lambda}(s)} \; \prod_{s\in R_{\lambda+\sigma/\lambda}- \{\sigma\}}  \frac{c_{\lambda+\sigma}(s)}{c_{\lambda}(s)} \\
		&= \ga \;\prod_{s\in C_{\lambda+\sigma/\lambda} - \{\sigma\}}  \frac{c_{\lambda}(s)+\gamma}{c_{\lambda}(s)} \; \prod_{s\in R_{\lambda+\sigma/\lambda}- \{\sigma\}}  \frac{c_{\lambda}(s)+1}{c_{\lambda}(s)} \\
		&= \ga \;\prod_{s\in C_{\lambda+\sigma/\lambda} - \{\sigma\}}  \left(1+O\left(\frac{1}{\mathrm{gap}(\lambda)}\right)\right) \; \prod_{s\in R_{\lambda+\sigma/\lambda}- \{\sigma\}}  \frac{c_{\lambda}(s)+1}{c_{\lambda}(s)}.
	\end{align*}
	The last term requires a bit more care due to the leg-lengths. We cut the row $i$ in parts with fixed leg-length:
	\begin{figure}[H]
		\usetikzlibrary{positioning}
		
		\[
		\begin{tikzpicture}[
			baseline=(current bounding box.center),
			box/.style={draw, rectangle, minimum height=1.4em, inner xsep=5pt, inner ysep=2pt},
			node distance=2pt
			]
			\node[] (L) {$\lambda_i +1:$};
			\node[box, right=2pt of L] (A) {$1\;\cdots \;\lambda_N$};
			\node[box, right=2pt of A] (B) {$\lambda_N + 1\;\cdots \;\lambda_{N-1}$};
			\node[box, right=2pt of B] (C) {$\cdots$};
			\node[box, right=2pt of C] (D) {$\lambda_{i+1}\; + 1\cdots \;\lambda_i$};
			\node[box, right=2pt of D] (S) {$\sigma$};
		\end{tikzpicture}
		\]
	\end{figure}
	One ends up with	
	\begin{align*}
		\prod_{s\in R_{\lambda+\sigma/\lambda}- \{\sigma\}}  \frac{c_{\lambda}(s)+1}{c_{\lambda}(s)}
		&= \prod_{k=0}^{N-i} \prod_{j=\lambda_{N-k+1}+1}^{\lambda_{N-k}}\frac{\lambda_i-j+\ga(N-i-k)+\ga+1}{\lambda_i-j+\ga(N-i-k)+\ga}\\
		&= \prod_{k=0}^{N-i} \frac{\lambda_i-(\lambda_{N-k+1}+1)+\ga(N-i-k)+\ga+1}{\lambda_i-\lambda_{N-k}+\ga(N-i-k)+\ga}.
	\end{align*}
	Therefore
	\begin{align*}
		\frac{c_{\lambda+\sigma}}{c_{\lambda}}\,\frac{c'_{\lambda}}{c'_{\lambda+\sigma}}
		&=  \gamma\left(1+O\left(\frac{1}{\mathrm{gap}(\lambda)}\right)\right) \times\\
		&\prod_{k=0}^{N-i} \frac{\lambda_i-(\lambda_{N-k+1}+1)+\ga(N-i-k)+\ga+1}{\lambda_i-\lambda_{N-k}+\ga(N-i-k)+\ga} \frac{\lambda_i-\lambda_{N-k}+\ga(N-i-k)+1}{\lambda_i-(\lambda_{N-k+1}+1)+\ga(N-i-k)+1+1}\\
		&=  \gamma\left(1+O\left(\frac{1}{\mathrm{gap}(\lambda)}\right)\right)\,\frac{1}{\ga}\, \frac{\Delta\lambda_i+\ga}{\Delta\lambda_i+1}\\
		&\prod_{k=0}^{N-i-1} \frac{\lambda_i-\lambda_{N-k+1}+\ga(N-i-k)+\ga}{\lambda_i-\lambda_{N-k+1}+\ga(N-i-k)+1}\;
		\frac{\lambda_i-\lambda_{N-k}+\ga(N-i-k)+1}{\lambda_i-\lambda_{N-k}+\ga(N-i-k)+\ga}\\
		&=1+O\left(\frac{1}{\mathrm{gap}(\lambda)}\right).
	\end{align*}

\end{proof}

The next elementary lemma allows one to replace, in the large gap regime, a bounded coefficient by its asymptotic value inside the partition sum. It will be used repeatedly in the proof of the joint convergence results. Recall that 
\[
G_\lambda(n)=\prod_{i=1}^{N} F\big((N-i+1)\gamma+\lambda_i\big)\,
F\big((N-i+1)\gamma+\lambda_i+n\big).
\]
\begin{lem}\label{lem:asymp}
	Let  $(a_\lambda(n))_{\lambda,n}$ be a family of real numbers such that
	\[
	\sup_{\lambda,n}|a_\lambda(n)|<\infty,
	\]
	and
	\[
	\lim_{M\to\infty}\;\sup_{n\ge 1}\;\sup_{\gap(\lambda)\ge M}\;
	|a_\lambda(n)-1|=0.
	\]
	Then, as  $n\to\infty$,
	\[
	\sum_\lambda a_\lambda(n)\,G_\lambda(n)\sim \sum_\lambda G_\lambda(n).
	\]
\end{lem}

\begin{proof}
	Fix  $\varepsilon>0$ and decompose
	\[
	\sum_\lambda \left(a_\lambda(n)-1\right)G_\lambda(n)
	=
	\sum_{\gap(\lambda)\le \varepsilon n}\left(a_\lambda(n)-1\right)G_\lambda(n)
	+
	\sum_{\gap(\lambda)> \varepsilon n}\left(a_\lambda(n)-1\right)G_\lambda(n).
	\]
	Hence, using  $S(n)=\sum_\lambda G_\lambda(n)$, one gets
	\[
	\left|\sum_\lambda a_\lambda(n)G_\lambda(n)-S(n)\right|
	\leq
	C\sum_{\gap(\lambda)\le \varepsilon n}G_\lambda(n)
	+
	\sup_{\gap(\lambda)>\varepsilon n}\left|a_\lambda(n)-1\right|\,S(n),
	\]
	where  $C=\sup_{\lambda,n}\left|a_\lambda(n)-1\right|<\infty$.
	
	Arguing as in the proof of Proposition \ref{pro:S}, one obtains the following negligibility estimate for the contribution of partitions with small gap:
	\[
	\lim_{\varepsilon\downarrow 0}\limsup_{n\to\infty}
	\frac{1}{S(n)}\sum_{\gap(\lambda)\le \varepsilon n}G_\lambda(n)=0.
	\]
	On the other hand, for each fixed  $\varepsilon>0$, the assumption on  $a_\lambda(n)$ gives
	\[
	\sup_{\gap(\lambda)>\varepsilon n}\left|a_\lambda(n)-1\right|\xrightarrow[n\to\infty]{}0.
	\]
	Dividing by  $S(n)$, taking  $n\to\infty$, and then  $\varepsilon\downarrow0$, we obtain
	\[
	\frac{1}{S(n)}\sum_\lambda a_\lambda(n)G_\lambda(n)\longrightarrow 1,
	\]
	which proves the claim.
\end{proof}

Finally, a straightforward computation gives the following form for the norm of $P_\lambda$:
\begin{equation}\label{lem:normP}
	\nor{P_\lambda}^2
	=
	K(N,\ga)\,C(N,\ga)\;\frac{c'_\lambda}{c_\lambda}\;
	\prod_{i=1}^N \F\left((N-i+1)\ga+\lambda_i\right),
\end{equation}
with $K(N,\ga)\de\frac{\Gamma(1+N\ga)}{\Gamma(1+\ga)^N}$ and
$C(N,\ga)\de\frac{\ga^{N-1}(N-1)!}{\Gamma(N\ga)}$.

\subsection{Joint convergence: $k=1$}
Fix $N\ge 1$ and $0\le p\le N$ and consider
\begin{equation*}
	\E{ |c_n|^{2(N-p)}|c_{n+1}|^{2p}}.
\end{equation*}
The same arguments as in Section \ref{sub:jackexp} lead to the following expression for the joint moment:
\begin{align*}
	\E{ |c_n|^{2(N-p)}|c_{n+1}|^{2p}}
	&=\frac{1}{(2\pi)^{2N}}\int_{[0,2\pi]^{2N}}
	\e^{\ii n\sum_{i\leq N}(\theta_i-\theta_i')}
	\e^{\ii \sum_{i\leq p}(\theta_i-\theta_i')}
	\prod_{ i<j} |\e^{\ii\theta_i}-\e^{\ii\theta_j}|^{2\gamma}\,|\e^{\ii\theta'_i}-\e^{\ii\theta'_j}|^{2\gamma}\\
	&\qquad\qquad\qquad\qquad\qquad\qquad\qquad\qquad\qquad\prod_{i,j}|\e^{\ii\theta_i}- \e^{\ii\theta'_j}|^{-2\gamma}\,
	\dth\,\dthp\\
	&=\sum_{\lambda,\nu}\frac{c_\lambda}{c'_\lambda}\frac{c_\nu}{c'_\nu}\,
	\frac{1}{(2\pi)^{2N}}\int_{[0,2\pi]^{2N}} \e^{\ii \sum_{i\leq p}\theta_i}P_{\lambda+n}(\e^{\ii\theta})\, 
	\e^{-\ii \sum_{i\leq p}\theta_i'}P_{\lambda+n}(\e^{-\ii\theta'})\\
	&\qquad\qquad\qquad\qquad\qquad P_\nu(\e^{-\ii\theta})\,P_\nu(\e^{\ii\theta'})\prod_{ i<j} |\e^{\ii\theta_i}-\e^{\ii\theta_j}|^{2\gamma}\,|\e^{\ii\theta'_i}-\e^{\ii\theta'_j}|^{2\gamma}\,\,\dth\,\dthp\nonumber\\
	&=\sum_{\lambda,\nu}\frac{c_\lambda}{c'_\lambda}\frac{c_\nu}{c'_\nu}\,
	\left|\left\langle x_{1}\cdots x_{p}\,P_{\lambda+n},\,P_{\nu}\right\rangle_{\gamma}\right|^2,
\end{align*}
where $x_{1},\dots,x_{p}$ are the $p$ variables carrying the $(n+1)$-phase.
In order to apply the Pieri formula \eqref{eq:Pieriformula}, we first symmetrize the expression. By the invariance of the Selberg inner product and the polynomials $P_{\mu}$ under permutations, all choices of $p$ variables among the $x_i$ contribute equally. Averaging over these $\binom{N}{p}$ choices therefore yields the following expression
\begin{equation}\label{eq:ep}
	\left\langle x_{1}\cdots x_{p}\,P_{\lambda+n},\,P_{\nu}\right\rangle_{\gamma}
	=
	\frac{1}{\binom{N}{p}}\left\langle e_{p}\,P_{\lambda+n},\,P_{\nu}\right\rangle_{\gamma}.
\end{equation}
The orthogonality of the Jack polynomials implies that
\begin{equation*}\label{eq:Pieri}
	\left\langle e_{p}\,P_{\lambda+n},\,P_{\nu}\right\rangle_{\gamma}
	=
	\begin{cases}
		\psi'_{\nu/(\lambda+n)}\,\left\|P_{\nu}\right\|_{\gamma}^{2},
		& \text{if } \nu/(\lambda+n) \text{ is a vertical strip of size } p, \\[0.3em]
		0, & \text{otherwise}.
	\end{cases}
\end{equation*}
This leads to
\begin{align*}
	\E{|c_n|^{2(N-p)} |c_{n+1}|^{2p}}
	&=\frac{1}{\binom{N}{p}^2}\,
	\sum_{\lambda}\,\frac{c_{\lambda}}{c'_{\lambda}}\,\sum_{\sigma}\,
	\frac{c_{\lambda+n+\sigma}}{c'_{\lambda+n+\sigma}} \psi^{\prime\,2}_{\lambda+n+\sigma/\lambda+n}\nor{P_{\lambda+n+\sigma}}^4,
\end{align*}
where the sum runs over all vertical strips $\sigma$ of size $p$. Therefore, with the notations in \eqref{lem:normP}
\begin{align*}
	\E{|c_n|^{2(N-p)} |c_{n+1}|^{2p}}
	&=\frac{1}{\binom{N}{p}^2}\, K(N,\ga)\,C(N,\ga)\,
	\sum_{\lambda}\frac{c_{\lambda}}{c'_{\lambda}}\,\sum_{\sigma}
	\psi^{\prime\,2}_{\lambda+\sigma/\lambda}\\
	&\qquad
	\prod_i \F((N-i+1)\ga+\lambda_i+n+\sigma_i)\,\nor{P_{\lambda+\sigma}}^2\\
	&= \frac{1}{\binom{N}{p}^2}\, K(N,\ga)^2\,C(N,\ga)^2\,\sum_\sigma
	\sum_{\lambda}\frac{c_{\lambda}}{c'_{\lambda}}\,\frac{c'_{\lambda+\sigma}}{c_{\lambda+\sigma}}\,\psi^{\prime\,2}_{\lambda+\sigma/\lambda}\\
	&\qquad
	\prod_i \frac{F((N-i+1)\gamma+\lambda_i+n+\sigma_i)F((N-i+1)\gamma+\lambda_i+\sigma_i)}{F((N-i+1)\gamma+\lambda_i+n)F((N-i+1)\gamma+\lambda_i)}\,G_\lambda(n)\\
	&=\frac{1}{\binom{N}{p}^2}\, K(N,\ga)^2\,C(N,\ga)^2
	\sum_\sigma\sum_{\lambda}a_\lambda^\sigma(n) G_\lambda(n),
\end{align*}
 with
\begin{equation*}
	a^\sigma_\lambda(n)
	=
	 \frac{c_{\lambda}}{c'_{\lambda}}\,\frac{c'_{\lambda+\sigma}}{c_{\lambda+\sigma}}\;\psi^{\prime\,2}_{\lambda+\sigma/\lambda}\;\prod_i \frac{F((N-i+1)\gamma+\lambda_i+n+\sigma_i)F((N-i+1)\gamma+\lambda_i+\sigma_i)}{F((N-i+1)\gamma+\lambda_i+n)F((N-i+1)\gamma+\lambda_i)}.
\end{equation*}
Using Lemma \ref{lem:pierito1}, Lemma \ref{lem:ratio}, and a straightforward estimate on the product term, we see that
	\[
	a^\sigma_\lambda(n)=1+O\left(\frac{1}{\mathrm{gap}(\lambda)}\right),
	\]
	uniformly in $n$.
Hence, by Lemma \ref{lem:asymp}, each of the $\binom{N}{p}$ terms indexed by $\sigma$ contributes equally at leading order, yielding
\begin{align*}
	\E{|c_n|^{2(N-p)} |c_{n+1}|^{2p}}
	&=\frac{1}{\binom{N}{p}^2}\, K(N,\ga)^2\,C(N,\ga)^2\,
	\sum_\sigma\sum_{\lambda}a_\lambda^\sigma(n) G_\lambda(n)\\
	&\sim (N-p)! \, p!\, \kappa(\beta)^N\; n^{-(1-2\ga)N},
\end{align*}
as $n\rightarrow\infty$. We recognize the mixed moments of two independent $\mathcal N_{\mathbb C}(0,\kappa(\beta))$ random variables.

\subsection{Joint convergence: $k=2$}

Before turning to the general case, let us briefly discuss the case $k=2$ at an
informal level. It already reveals the main mechanism of the argument.

Fix non-negative integers $\ell_0,\ell_1,\ell_2$ and set
\[
N=\ell_0+\ell_1+\ell_2.
\]
We consider
\[
\E{\left|c_n\right|^{2\ell_0}\left|c_{n+1}\right|^{2\ell_1}\left|c_{n+2}\right|^{2\ell_2}}.
\]
Exactly as in the case $k=1$, after applying Stanley's Cauchy identity and the
shift property of Jack polynomials, one is led to scalar products of the form
\[
\big\langle (x_{\ell_0+1}\cdots x_{\ell_0+\ell_1})
(x_{\ell_0+\ell_1+1}\cdots x_N)^2\,P_{\lambda+n},P_\nu\big\rangle_\gamma .
\]
The relevant symmetric polynomial is therefore $Q_{\ell_1,\ell_2}$, defined as the
symmetrization of the monomial
\[
(x_{\ell_0+1}\cdots x_{\ell_0+\ell_1})(x_{\ell_0+\ell_1+1}\cdots x_N)^2.
\]

To bring the Pieri formula into play, one would like to rewrite
$Q_{\ell_1,\ell_2}$ in terms of the elementary symmetric polynomials. This is
still manageable for small values of $(\ell_1,\ell_2)$, but it is not the right
point of view for the general case. Instead, we introduce the generating series
\[
E(\alpha)\de\prod_{i=1}^N (1+\alpha x_i)=\sum_{r=0}^N \alpha^r e_r(x).
\]
Indeed,
\[
E(\alpha)E(\beta)
= \prod_{i=1}^N (1+\alpha x_i)(1+\beta x_i)
= \prod_{i=1}^N (1+ux_i+v x_i^2),
\]
where $u=\alpha+\beta$ and $v=\alpha\beta$. Expanding the last product yields
\[
E(\alpha)E(\beta)
=\sum_{n_1+n_2\le N} u^{n_1}v^{n_2} Q_{n_1,n_2}(x),
\]
so that $Q_{\ell_1,\ell_2}$ is exactly the coefficient of $u^{\ell_1}v^{\ell_2}$.
Pieri formula applied to $E(\alpha)=\sum_{r=0}^N \alpha^r e_r(x)$ gives
\[
E(\alpha)P_\lambda=\sum_{\mu\supset\lambda}\alpha^{|\mu|-|\lambda|}
\psi'_{\mu/\lambda}P_\mu,
\]
where the sum runs over all vertical strips.
Applying the Pieri formula successively for $\alpha$ and $\beta$ gives a
decomposition of $Q_{\ell_1,\ell_2}P_\lambda$ into Jack polynomials obtained by
adding two vertical strips. In the large gap regime, Lemma \ref{lem:pierito1}
shows that the Pieri coefficients are asymptotically equal to $1$. This is the
key simplification, and in the next section we will use it to show that asymptotically
\[
Q_{\ell_1,\ell_2} P_\lambda = \sum_{\sigma\; (\ell_1,\ell_2)\text{-shape}} P_{\lambda+\sigma}.
\]
Returning to the moment expansion, this means that only partitions of the form
\[
\nu=\lambda+n+\sigma,
\]
with $\sigma$ of $(\ell_1,\ell_2)$-shape, contribute at leading order. The number
of such shapes is $\binom{N}{\ell_0,\ell_1,\ell_2} = \frac{N!}{\ell_0 ! \ell_1 ! \ell_2!}$,
and each of them gives the same leading asymptotics. We therefore recover
\[
\E{\left|c_n\right|^{2\ell_0}\left|c_{n+1}\right|^{2\ell_1}\left|c_{n+2}\right|^{2\ell_2}}
\sim\; \ell_0!\ell_1!\ell_2!\;\kappa(\beta)^N\, n^{-N(1-2\gamma)}.
\]
Let us now pass directly to the general case.

\subsection{General case $k$}\label{sec:generalk}

Fix an integer $k\geq 1$ and non-negative integers $\ell_0,\dots,\ell_k$ such that
\[
\ell_0+\dots+\ell_k=N.
\]
We write
\[
\ell=(\ell_0,\dots,\ell_k),\qquad
\text{wt}(\ell)\de\sum_{r=1}^k r\ell_r.
\]
We consider here the mixed modulus moment
\begin{equation*}
\mathbb E\bigg[\prod_{j=0}^k |c_{n+j}|^{2\ell_j}\bigg].
\end{equation*}
Let $Q_\ell$ be the symmetrization of a monomial with $\ell_j$ variables of degree $j$, for $0\le j\le k$. Equivalently,
\begin{equation*}
	Q_{\ell}(x)
	\de
	\sum_{(I_0,\dots,I_k)}\; \prod_{j=0}^k x_{I_j}^{j},
\end{equation*}
where the sum runs over all ordered partitions $(I_0,\dots,I_k)$ of $\{1,\dots,N\}$ such that $|I_j|=\ell_j$, and where $x_I \de \prod_{i\in I}x_i$.
By symmetry of the Selberg inner product, the same argument as in the cases $k=1,2$ gives
\begin{equation}\label{eq:kmoment}
	\mathbb E\bigg[\prod_{j=0}^k |c_{n+j}|^{2\ell_j}\bigg]
	=
	\frac{1}{\binom{N}{\ell}^2}\;
	\sum_{\lambda,\nu}\;
	\frac{c_\lambda}{c'_\lambda}\frac{c_\nu}{c'_\nu}
	\;
	\Big|\big\langle Q_\ell P_{\lambda+n},P_\nu\big\rangle_\gamma\Big|^2.
\end{equation}

\subsection*{Exact expansion in the Jack basis.}
Let
\[
S_\ell\de\Big\{\sigma\in\{0,\dots,k\}^N: \sum_{i=1}^N \sigma_i=\text{wt}(\ell)\Big\}.
\]
For every partition $\lambda$ and every $\sigma\in S_\ell$, let
$A^\ell_\lambda(\sigma)$ be the coefficient of $P_{\lambda+\sigma}$ in the decomposition of
$Q_\ell P_\lambda$ on the Jack basis, with the convention that
$A^\ell_\lambda(\sigma)=0$ whenever $\lambda+\sigma$ is not a partition. By the shift property,
these coefficients do not depend on $n$, and one has the exact expansion
\begin{equation}\label{eq:QPexp}
	Q_\ell P_{\lambda+n}
	=
	\sum_{\sigma\in S_\ell} A^\ell_\lambda(\sigma)\,P_{\lambda+n+\sigma}.
\end{equation}

In order to compute the coefficients $A^\ell_\lambda(\sigma)$, it is easier to consider the action of more general polynomials on $P_\lambda$. Introduce formal parameters $t_1,\dots,t_k$ and remark that
\[
\prod_{i=1}^N (1+t_1x_i+\cdots+t_kx_i^k)
=
\sum_{n_0+\cdots+n_k=N} t_1^{n_1}\cdots t_k^{n_k}\,Q_{n_0,\dots,n_k}(x).
\]
The following factorization will be very useful
\[
1+t_1x+\cdots+t_kx^k=\prod_{j=1}^k(1+\alpha_jx).
\]
In other words, $t_r=e_r(\alpha_1,\dots,\alpha_k)$, and
\[
\prod_{i=1}^N (1+t_1x_i+\cdots+t_kx_i^k)
=
\prod_{j=1}^k E(\alpha_j),
\qquad
E(\alpha)=\prod_{i=1}^N(1+\alpha x_i)=\sum_{r=0}^N \alpha^r e_r.
\]
Therefore,
\begin{equation}\label{eq:E-Q}
	\prod_{j=1}^k E(\alpha_j)
	=
	\sum_{n_0+\cdots+n_k=N}
	t_1^{n_1}\cdots t_k^{n_k}\,Q_{n_0,\dots,n_k},
\end{equation}
and the expression of $Q_\ell P_\lambda$ is given by the coefficient of $t_1^{\ell_1}\cdots t_k^{\ell_k}$.

For $\sigma\in S_\ell$, let $B^\sigma_\lambda(\alpha) = B^\sigma_\lambda(\alpha_1,\dots,\alpha_k)$ be the coefficients such that
\[
\prod_{j=1}^k E(\alpha_j)P_\lambda = \sum_\sigma B^\sigma_\lambda(\alpha) P_{\lambda+\sigma}.
\]
Let us compute those coefficients now. The Pieri formula applied with $E(\alpha) = \sum_{r=0}^N \alpha^r e_r$ gives
\begin{equation}\label{eq:E-alpha-on-P}
	E(\alpha)P_\lambda
	=
	\sum_{\mu\supset\lambda}
	\alpha^{|\mu|-|\lambda|}\,\psi'_{\mu/\lambda}\,P_\mu,
\end{equation}
where the sum runs over all partitions $\mu$ such that $\mu/\lambda$
is a vertical strip (of arbitrary size).
Applying it successively for
$\alpha_1,\dots,\alpha_k$, we obtain
\begin{equation*}
	\Bigg(\prod_{j=1}^k E(\alpha_j)\Bigg) P_\lambda
	=
	\sum_{\nu\supset\lambda}
		\sum_{\mathcal C}
	\Bigg(\prod_{j=1}^k \alpha_j^{\,|\lambda^{(j)}|-|\lambda^{(j-1)}|}
	\psi'_{\lambda^{(j)}/\lambda^{(j-1)}}\Bigg)\,P_\nu,
\end{equation*}
where the second sum runs over all chains of partitions
\begin{equation*}
	\mathcal C:\quad
	\lambda=\lambda^{(0)}\subset\lambda^{(1)}\subset\cdots\subset\lambda^{(k)}=\nu,
\end{equation*}
such that each skew diagram $\lambda^{(j)}/\lambda^{(j-1)}$ is a
vertical strip (possibly empty). Therefore
\begin{equation}\label{eq:Bsigma}
	B^\sigma_\lambda(\alpha)
	=
	\sum_{\mathcal C}
	\prod_{j=1}^k
	\alpha_j^{|\lambda^{(j)}|-|\lambda^{(j-1)}|}
	\psi'_{\lambda^{(j)}/\lambda^{(j-1)}},
\end{equation}
where the sum runs over all chains
\[
\mathcal C:\qquad
\lambda=\lambda^{(0)}\subset\lambda^{(1)}\subset\cdots\subset\lambda^{(k)}=\lambda+\sigma.
\]

On the other hand, by \eqref{eq:E-Q}, the same coefficient can be written as
\[
B^\sigma_\lambda(\alpha)
=
\sum_{n_0+\cdots+n_k=N}
t_1^{n_1}\cdots t_k^{n_k}\,
A^{(n_1,\dots,n_k)}_\lambda(\sigma).
\]
In particular, $A^\ell_\lambda(\sigma)$ is exactly the coefficient of
$t_1^{\ell_1}\cdots t_k^{\ell_k}$ in this expansion.

\subsection*{Large gap regime.}
The coefficients $A^\ell_\lambda(\sigma)$ do not have a simple expression but they simplify drastically in the large gap regime as the next proposition shows. Recall that
\[
\mathcal A_\ell\de\{\sigma\in S_\ell: \sigma\text{ is an }(\ell_1,\dots,\ell_k)\text{-shape}\}.
\]
\begin{prop}\label{lem:Aasymp}
	For every fixed shape $\sigma\in S_\ell$, one has
	\[
	A^\ell_\lambda(\sigma)
	=
	\ind_{\{\sigma\in\mathcal A_\ell\}}
	+
	O\left(\frac{1}{\gap(\lambda)}\right)
	\qquad\text{as }\gap(\lambda)\to\infty.
	\]
\end{prop}

\begin{proof}
	Let
	\[
	m\de|\sigma|=\sum_{i=1}^N \sigma_i
	\]
	and denote by $\Lambda_{k,m}$ the finite-dimensional space of homogeneous
	symmetric polynomials of degree $m$ in the variables
	$\alpha_1,\dots,\alpha_k$.
	
	We first claim that
	\[
	B^\sigma_\lambda(\alpha_1,\dots,\alpha_k)
	=
	B^0_\sigma(\alpha_1,\dots,\alpha_k)
	+
	R^\sigma_\lambda(\alpha_1,\dots,\alpha_k),
	\]
	where $R^\sigma_\lambda\in \Lambda_{k,m}$ and every coefficient of
	$R^\sigma_\lambda$ in the monomial symmetric basis of $\Lambda_{k,m}$ is
	$O(\gap(\lambda)^{-1})$ and
	\[
	B^{0}_{\sigma}(\alpha_1,\dots,\alpha_k)
	\de
	\sum_{\mathcal C}
	\prod_{j=1}^k \alpha_j^{\,|\lambda^{(j)}|-|\lambda^{(j-1)}|}.
	\]
	Indeed, by Lemma \ref{lem:pierito1}, each Pieri coefficient in
	\eqref{eq:Bsigma} satisfies
	\[
	\psi'_{\lambda^{(j)}/\lambda^{(j-1)}}
	=
	1+O\left(\frac{1}{\gap(\lambda)}\right),
	\]
	uniformly over all admissible chains. Hence for every such chain $\mathcal C$,
	\[
	\prod_{j=1}^k \psi'_{\lambda^{(j)}/\lambda^{(j-1)}}
	=
	1+O\left(\frac{1}{\gap(\lambda)}\right).
	\]
	It follows that each
	coefficient of $R^\sigma_\lambda$ in any fixed basis of
	$\Lambda_{k,m}$ is $O(\gap(\lambda)^{-1})$.
	
	Now assume $\gap(\lambda)\ge k$. Then choosing a chain of partitions ending at $\lambda+\sigma$
	\begin{equation*}
		\mathcal C:\quad
		\lambda=\lambda^{(0)}\subset\lambda^{(1)}\subset\cdots\subset\lambda^{(k)}=\lambda+\sigma,
	\end{equation*}
	is equivalent to choosing, for each row $i$, a subset $S_i\subset[k]$ of cardinality $\sigma_i$, recording at which steps the $i$-row is incremented. Therefore
	\begin{align*}
		\sum_{\mathcal C}
		\prod_{j=1}^k
		\alpha_j^{\,|\lambda^{(j)}|-|\lambda^{(j-1)}|}
		&= \sum_{\substack{S_1,\cdots,S_N\subset [k]\\ |S_i|=\sigma_i}} \alpha_1^{\sum_{j=1}^N \ind_{S_j}(1)}\cdots\alpha_k^{\sum_{j=1}^N \ind_{S_j}(k)}\\
		&= \sum_{\substack{S_1,\cdots,S_N\subset [k]\\ |S_i|=\sigma_i}} \alpha_1^{\ind_{S_1}(1)}\cdots\alpha_k^{\ind_{S_1}(k)}\cdots\alpha_1^{\ind_{S_N}(1)}\cdots\alpha_k^{\ind_{S_N}(k)}\\
		&=\Bigg(\sum_{\substack{S_1\subset [k]\\ |S_1|=\sigma_1}} \alpha_1^{\ind_{S_1}(1)}\cdots\alpha_k^{\ind_{S_1}(k)} \Bigg)\cdots\Bigg(\sum_{\substack{S_N\subset [k]\\ |S_N|=\sigma_N}} \alpha_1^{\ind_{S_N}(1)}\cdots\alpha_k^{\ind_{S_N}(k)}\Bigg),
	\end{align*}
	but $\sum_{\substack{S_i\subset [k]\\ |S_i|=\sigma_i}} \prod_j\alpha_j^{\ind_{S_i}(j)}$ is exactly $e_{\sigma_i}(\alpha)$, thus
	\begin{equation*}
		\sum_{\mathcal C}
		\prod_{j=1}^k
		\alpha_j^{\,|\lambda^{(j)}|-|\lambda^{(j-1)}|}
		=\prod_{i=1}^{N} e_{\sigma_i}(\alpha)
		=\prod_{i=1}^{N} t_{\sigma_i}
		=\prod_{j=1}^{k} t_j^{n_j},
	\end{equation*}
	where $(n_1,\dots,n_k)$ is the shape of $\sigma$. Hence
	\[
	[t_1^{\ell_1}\cdots t_k^{\ell_k}]\,B^0_\sigma
	=
	\ind_{\{\sigma\in\mathcal A_\ell\}}.
	\]
	
	It remains to control the error term after passing from the
	$\alpha$-description to the $t$-description. Notice that the family
	\[
	\Big\{t_1^{a_1}\cdots t_k^{a_k}: a_1,\dots,a_k\ge0, \sum_{r=1}^k r a_r=m\Big\}
	\]
	is a basis of $\Lambda_{k,m}$. Since $\Lambda_{k,m}$ is finite-dimensional, the change of basis is continuous, therefore the coefficients of $R^\sigma_\lambda$ in this basis are also $O(\gap(\lambda)^{-1})$. Finally,
	\begin{align*}
		A^\ell_\lambda(\sigma)
		&=
		[t_1^{\ell_1}\cdots t_k^{\ell_k}]\;B^\sigma_\lambda\\
		&=
		[t_1^{\ell_1}\cdots t_k^{\ell_k}]\;B^0_\sigma
		+
		O\left(\frac{1}{\gap(\lambda)}\right)\\
		&=\ind_{\{\sigma\in\mathcal A_\ell\}}+O\left(\frac{1}{\gap(\lambda)}\right).
	\end{align*}
\end{proof}
\subsection*{Reduction to a finite sum of simple series.}
Inserting \eqref{eq:QPexp} into \eqref{eq:kmoment}, we obtain
\begin{equation*}
	\mathbb E\bigg[\prod_{j=0}^k |c_{n+j}|^{2\ell_j}\bigg]
	=
	\frac{1}{\binom{N}{\ell}^2}\;
	\sum_{\lambda,\nu}\;
	\frac{c_\lambda}{c'_\lambda}\frac{c_\nu}{c'_\nu}\;
	\left|
	\sum_{\sigma\in S_\ell}
	A^\ell_\lambda(\sigma)\,
	\langle P_{\lambda+n+\sigma},P_\nu\rangle_\gamma
	\right|^2.
\end{equation*}
Expanding the square and using orthogonality of the Jack polynomials yields
\begin{equation}\label{eq:kmoment2}
\mathbb E\bigg[\prod_{j=0}^k |c_{n+j}|^{2\ell_j}\bigg]
	=
	\frac{1}{\binom{N}{\ell}^2}\;
	\sum_{\sigma\in S_\ell}\sum_{\lambda}\;
	\frac{c_\lambda}{c'_\lambda}\frac{c_{\lambda+n+\sigma}}{c'_{\lambda+n+\sigma}}
	A^\ell_\lambda(\sigma)^2\;
	\|P_{\lambda+n+\sigma}\|_\gamma^4.
\end{equation}
Using \eqref{lem:normP}, exactly as in the case $k=1$, we can rewrite each inner sum in the form
\[
\sum_{\lambda} a^\sigma_\lambda(n)\,G_\lambda(n),
\]
more precisely
\[
\frac{c_\lambda}{c'_\lambda}\frac{c_{\lambda+n+\sigma}}{c'_{\lambda+n+\sigma}}\;
A^\ell_\lambda(\sigma)^2\,
\|P_{\lambda+n+\sigma}\|_\gamma^4
=
K(N,\gamma)^2\,C(N,\gamma)^2\,a^\sigma_\lambda(n)\,G_\lambda(n),
\]
where $(a^\sigma_\lambda(n))_{\lambda,n}$ is bounded. By Lemma \ref{lem:ratio}, Lemma \ref{lem:Aasymp},
and the same elementary estimate on the product term as in the case $k=1$, we obtain, for every fixed
$\sigma\in S_\ell$,
\[
a^\sigma_\lambda(n)=
\begin{cases}
	1+O\left(\dfrac{1}{\gap(\lambda)}\right),
	& \text{if }\sigma\in\mathcal A_\ell,\\[1em]
	O\left(\dfrac{1}{\gap(\lambda)}\right),
	& \text{if }\sigma\notin\mathcal A_\ell,
\end{cases}
\qquad\text{uniformly in }n.
\]
If $\sigma\in\mathcal A_\ell$, Lemma \ref{lem:asymp} gives
\[
\sum_\lambda a^\sigma_\lambda(n)\,G_\lambda(n)\sim \sum_\lambda G_\lambda(n).
\]
If $\sigma\notin\mathcal A_\ell$, we apply Lemma \ref{lem:asymp} to the family
$1-a^\sigma_\lambda(n)$ and obtain
\[
\sum_\lambda a^\sigma_\lambda(n)\,G_\lambda(n)=o\left(\sum_\lambda G_\lambda(n)\right).
\]
Since $S_\ell$ is finite and $|\mathcal A_\ell|=\binom{N}{\ell}$, it follows from \eqref{eq:kmoment2} that
\begin{align*}
	\mathbb E\bigg[\prod_{j=0}^k |c_{n+j}|^{2\ell_j}\bigg]
	&=
	\frac{1}{\binom{N}{\ell}^2}\,
	K(N,\gamma)^2\,C(N,\gamma)^2
	\left(\binom{N}{\ell}+o(1)\right)\sum_\lambda G_\lambda(n)\\
	&\sim \dfrac{1}{\binom{N}{\ell}}\,\mathbb E|c_n|^{2N}.
\end{align*}
Finally, using the single-mode moment asymptotics obtained in Section \ref{sec:mom1},
\begin{align*}
\mathbb E\bigg[\prod_{j=0}^k |c_{n+j}|^{2\ell_j}\bigg]
	\sim
	\ell_0!\cdots \ell_k!\;\kappa(\beta)^N\,n^{-N(1-2\gamma)},
	\qquad\text{as }n\rightarrow\infty.
\end{align*}
The constant term is exactly the mixed modulus moment of $k+1$ independent complex Gaussian random variables
with law $\mathcal N_{\mathbb C}(0,\kappa(\beta))$.

\subsection{Complex mixed moments}\label{sub:mix}
Let us now look at the general moments where some exponents of $c_{n+j}$ do not match those of $\overline{c_{n+j}}$:
\begin{equation*}
	M_n(\ell,m)\de \E{\prod_{j=0}^k c_{n+j}^{\ell_j}\,\overline{c_{n+j}}^{m_j}},
\end{equation*}
where $\ell_j,m_j$ are nonnegative integers, and set
\begin{equation*}
	d_j \de \ell_j-m_j,\qquad N_+\de \sum_{j=0}^k \ell_j,\qquad N_-\de \sum_{j=0}^k m_j.
\end{equation*}
Introducing angle variables $\theta_{j,a}$ for $1\le a\le \ell_j$ and $\theta'_{j,b}$ for $1\le b\le m_j$, a direct computation yields the Coulomb-gas integral
\begin{align}\label{eq:cmix}
	M_n(\ell,m)
	&=\frac{1}{(2\pi)^{N_-+N_+}}\int_{[0,2\pi]^{N_++N_-}}
	\exp\Bigg(-\ii\sum_{j=0}^k (n+j)\bigg\{\sum_{a=1}^{\ell_j}\theta_{j,a}-\sum_{b=1}^{m_j}\theta'_{j,b}\bigg\}\Bigg)\nonumber\\
	&\qquad\times
	\frac{
		\prod_{(j,a)<(j',a')}|\e^{\ii\theta_{j,a}}-\e^{\ii\theta_{j',a'}}|^{2\gamma}\;
		\prod_{(j,b)<(j',b')}|\e^{\ii\theta'_{j,b}}-\e^{\ii\theta'_{j',b'}}|^{2\gamma}
	}{
		\prod_{(j,a),(j',b)}|\e^{\ii\theta_{j,a}}-\e^{\ii\theta'_{j',b}}|^{2\gamma}
	}\;
	\dth\;\dthp,
\end{align}
where $<$ is the lexicographic order. Now, let 
\begin{equation*}
	S_n(\ell,m)\de\sum_{j=0}^k (n+j)\,d_j.
\end{equation*}
If $S_n(\ell,m)\neq 0$, then $M_n(\ell,m)=0$. Indeed, if one performs the change of variables for all angles
\begin{equation*}
	\theta_{j,a}\mapsto \theta_{j,a}+\alpha,\qquad \theta'_{j,b}\mapsto \theta'_{j,b}+\alpha,
\end{equation*}
the Coulomb-gas factor in \eqref{eq:cmix} depends only on differences of angles, hence is invariant, and the oscillatory factor gets an extra phase $\e^{-\ii\alpha S_n(\ell,m)}$. Therefore, for every $\alpha$,
\begin{equation*}
	M_n(\ell,m)=\e^{-\ii\alpha S_n(\ell,m)}\,M_n(\ell,m).
\end{equation*}
Choosing $\alpha$ so that $\e^{-\ii\alpha S_n(\ell,m)}\neq 1$ forces $M_n(\ell,m)=0$.

\noindent If $S_n(\ell,m)=0$ for some large enough $n$, then necessarily
\begin{equation*}
	\sum_{j=0}^k d_j=0	\qquad\text{and}\qquad\sum_{j=0}^k j\,d_j=0.
\end{equation*}
The following proposition treats this particular case.
\begin{prop}
	Assume that $\sum d_j=\sum j d_j=0$ and $\ell\neq m$, then $N=N_+=N_-$ and
	\begin{equation*}
		n^{N(1-\beta^2)} M_n(\ell,m)\longrightarrow 0, \qquad \text{as }n\rightarrow \infty.
	\end{equation*}
\end{prop}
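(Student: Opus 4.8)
The plan is to reproduce the Jack-polynomial expansion of the previous subsections and then exploit a cancellation forced by the mismatch $\ell\neq m$. First, $\sum_j d_j=0$ gives $N_+=\sum_j\ell_j=\sum_j m_j$, the common value $N$ of the statement. We may assume $\ell\neq m$, since $\ell=m$ is exactly the moment $\E{\prod_{j=0}^k|c_{n+j}|^{2\ell_j}}$ treated in the preceding subsection; the other hypothesis $\sum_j j\,d_j=0$ enters only to ensure that the two symmetric polynomials appearing below have the same degree (were it to fail the same computation would give $M_n(\ell,m)=0$, consistently with $S_n(\ell,m)\neq0$). I would then write the oscillatory weight of \eqref{eq:cmix} on the $\theta$-side as $\prod_{j,a}\e^{\ii(n+j)\theta_{j,a}}=\bigl(\prod_{j,a}\e^{\ii\theta_{j,a}}\bigr)^{n}\,\prod_{j,a}\e^{\ii j\theta_{j,a}}$ and likewise on the $\theta'$-side, expand the denominator by Stanley's identity \eqref{eq:CS}, apply the shift property \eqref{eq:shift}, and symmetrize over the distribution of the phases exactly as in Section~\ref{sub:jackexp} and in the preceding joint-moment computation. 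The $\theta$-integral then produces $\langle Q_{\ell_1,\dots,\ell_k}P_{\lambda+n},P_\nu\rangle_\gamma$ and the $\theta'$-integral the analogous quantity with $Q_{m_1,\dots,m_k}$, where $Q_{\ell_1,\dots,\ell_k}$ is the symmetrization of $(x_{\ell_0+1}\cdots x_{\ell_0+\ell_1})^1\cdots(x_{N-\ell_k+1}\cdots x_{N})^k$ from the general case. Since all coefficients are real this gives
\begin{equation*}
	M_n(\ell,m)=\frac{(2\pi)^{2N}}{\binom{N}{\ell_0,\dots,\ell_k}\binom{N}{m_0,\dots,m_k}}\,\sum_{\lambda}\frac{c_\lambda}{c'_\lambda}\sum_{\nu}\frac{c_\nu}{c'_\nu}\,\langle Q_{\ell_1,\dots,\ell_k}P_{\lambda+n},P_\nu\rangle_\gamma\,\langle Q_{m_1,\dots,m_k}P_{\lambda+n},P_\nu\rangle_\gamma.
\end{equation*}

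The key point is that the two factors force incompatible shapes. By the large-gap analysis of the general case, for $\mathrm{gap}(\lambda)$ large, $\langle Q_{\ell_1,\dots,\ell_k}P_{\lambda+n},P_\nu\rangle_\gamma$ equals $\nor{P_\nu}^2$ times $1+O(\mathrm{gap}(\lambda)^{-1})$ when $\nu/(\lambda+n)$ has shape $(\ell_1,\dots,\ell_k)$, times $O(\mathrm{gap}(\lambda)^{-1})$ for the (boundedly many) remaining $\nu$ with $|\nu|=|\lambda+n|+\sum_r r\ell_r$, and vanishes otherwise — this is the cancellation already observed for $(N_1,N_2)=(1,1)$. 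The same holds for $Q_{m_1,\dots,m_k}$. Now $\ell\neq m$ together with $N_+=N_-$ forces $(\ell_1,\dots,\ell_k)\neq(m_1,\dots,m_k)$, so these shapes have distinct multisets of entries; since $\sigma\mapsto(\lambda+n)+\sigma$ is injective, no $\nu$ can be of shape $(\ell_1,\dots,\ell_k)$ and of shape $(m_1,\dots,m_k)$ relative to $\lambda+n$ at once. Hence for every $\nu$ at least one of the two inner products carries a factor $O(\mathrm{gap}(\lambda)^{-1})$ while the other is $O(\nor{P_\nu}^2)$ uniformly (the Pieri coefficients are bounded since $b_\lambda(s)\in[\gamma,1)$), so that
\begin{equation*}
	\Bigl|\sum_{\nu}\frac{c_\nu}{c'_\nu}\langle Q_{\ell_1,\dots,\ell_k}P_{\lambda+n},P_\nu\rangle_\gamma\langle Q_{m_1,\dots,m_k}P_{\lambda+n},P_\nu\rangle_\gamma\Bigr|\le\frac{C(N,k,\gamma)}{\mathrm{gap}(\lambda)}\sum_{\nu\in\mathcal E_\lambda}\frac{c_\nu}{c'_\nu}\nor{P_\nu}^4,
\end{equation*}
where $\mathcal E_\lambda=\{\lambda+n+\sigma\}$, $\sigma$ ranging over a fixed finite family of shapes.

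Finally I would split $\sum_\lambda$ according to $\mathrm{gap}(\lambda)\le\varepsilon n$ or $\mathrm{gap}(\lambda)>\varepsilon n$, as in the proof of Lemma~\ref{lem:asymp}. On the large-gap piece $\mathrm{gap}(\lambda)^{-1}\le(\varepsilon n)^{-1}$, and reducing $\frac{c_\lambda}{c'_\lambda}\frac{c_\nu}{c'_\nu}\nor{P_\nu}^4$ to $G_\lambda(n)$ times a uniformly bounded factor (via \eqref{eq:rectc}, \eqref{eq:rectc'}, Lemma~\ref{lem:ratio}, \eqref{lem:normP}, and $F(x+j)/F(x)=O(1)$ for $x\ge\gamma$) and invoking Proposition~\ref{pro:S} yields a bound $O\bigl(\varepsilon^{-1}n^{-1-N(1-2\gamma)}\bigr)$. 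On the small-gap piece one bounds the two inner products crudely by $O(\nor{P_\nu}^2)$ and, after the same reduction, the series–integral comparison of Lemma~\ref{lem:asymp} bounds it by $n^{-N(1-2\gamma)}\bigl(\int_{\exists i:\,\Delta x_i<\varepsilon}\frac{\dx}{\prod_j x_j^{1-\gamma}(1+x_j)^{1-\gamma}}+o(1)\bigr)$, whose integral vanishes as $\varepsilon\to0$. Therefore $\limsup_{n\to\infty}n^{N(1-2\gamma)}|M_n(\ell,m)|$ is dominated by a quantity tending to $0$ with $\varepsilon$, hence equals $0$; as $1-2\gamma=1-\beta^2$, this is the claim.

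I expect the main obstacle to be the second step: checking rigorously that the Pieri ``error'' partitions produced by $Q_{\ell_1,\dots,\ell_k}$ and $Q_{m_1,\dots,m_k}$ can never line up to contribute at order $1$, and that their number stays bounded uniformly in $\lambda$, so that the mismatch $\ell\neq m$ genuinely buys one factor $\mathrm{gap}(\lambda)^{-1}$. Granting this, the remaining estimates merely repeat those behind Proposition~\ref{pro:S} and Lemma~\ref{lem:asymp}.
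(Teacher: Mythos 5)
Your proposal is correct and follows essentially the same route as the paper: expand $M_n(\ell,m)$ via Stanley's identity with the insertions $Q_{\ell}$ and $Q_{m}$, observe that $\ell\neq m$ (with $N_+=N_-$) forces the two Pieri expansions to be supported on disjoint families of shapes so that for every $\nu$ at least one inner product is $O(\mathrm{gap}(\lambda)^{-1})$, and then kill the remainder by the gap-splitting argument of Lemma~\ref{lem:asymp}. The only difference is one of detail: the paper compresses the entire error control into the symbol $o\left(\mathbb E|c_n|^{2N}\right)$, whereas you spell out the uniform bounds on the Pieri coefficients and the reduction to $G_\lambda(n)$ that justify it.
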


\begin{proof}
Using Stanley's Cauchy identity twice, one obtains a double series analogous to the previous section, but with additional insertions:
\begin{equation*}
M_n(\ell,m)
=C_{\ell,m}\;\sum_{\lambda,\nu}
\frac{c_\lambda}{c'_\lambda}\frac{c_\nu}{c'_\nu}
\,
\langle Q_\ell P_{\lambda+n},P_\nu\rangle_\gamma
\,
\langle Q_m P_{\lambda+n},P_\nu\rangle_\gamma,
\end{equation*}
where $C_{\ell,m}= \binom{N}{\ell}^{-1}\binom{N}{m}^{-1}$ and
\begin{equation*}
	Q_{\ell}(x)
	=
	\sum_{\substack{(I_0,\dots,I_k)\ \text{partition of }\{1,\dots,N\}\\ |I_j|=\ell_j}}
	\prod_j x_{I_j}^{j}.
\end{equation*}
Recall the decomposition \eqref{eq:QPexp} of the previous section
\[
Q_\ell P_{\lambda+n}
=
\sum_{\sigma\in S_\ell} A^\ell_\lambda(\sigma)\,P_{\lambda+n+\sigma}.
\]

Replacing the first scalar product in the expression of $M_n(\ell,m)$ by this decomposition, and
interchanging the sums over  $\nu$ and  $\sigma$, we obtain
\begin{align*}
	M_n(\ell,m)
	&=C_{\ell,m}
	\sum_{\sigma\in S_\ell}\sum_{\lambda,\nu}
	\frac{c_\lambda}{c'_\lambda}\frac{c_\nu}{c'_\nu}\;
	A^\ell_\lambda(\sigma)\,
	\langle P_{\lambda+n+\sigma},P_\nu\rangle_\gamma
	\langle Q_m P_{\lambda+n},P_\nu\rangle_\gamma.
\end{align*}
By orthogonality of the Jack polynomials, only the term  $\nu=\lambda+n+\sigma$ survives, so that
\begin{align*}
	M_n(\ell,m)
	&=C_{\ell,m}
	\sum_{\sigma\in S_\ell}\sum_{\lambda}
	\frac{c_\lambda}{c'_\lambda}\frac{c_{\lambda+n+\sigma}}{c'_{\lambda+n+\sigma}}\;
	A^\ell_\lambda(\sigma)\,
	\langle Q_m P_{\lambda+n},P_{\lambda+n+\sigma}\rangle_\gamma
	\|P_{\lambda+n+\sigma}\|_\gamma^2 .
\end{align*}
Applying the same decomposition to  $Q_m P_{\lambda+n}$, we get
\[
\langle Q_m P_{\lambda+n},P_{\lambda+n+\sigma}\rangle_\gamma
=
A^m_\lambda(\sigma)\,\|P_{\lambda+n+\sigma}\|_\gamma^2,
\]
and therefore
\begin{equation}\label{eq:Mnserie}
	M_n(\ell,m)
	=C_{\ell,m}
	\sum_{\sigma\in S_\ell}\sum_{\lambda}
	\frac{c_\lambda}{c'_\lambda}\frac{c_{\lambda+n+\sigma}}{c'_{\lambda+n+\sigma}}\;
	A^\ell_\lambda(\sigma)A^m_\lambda(\sigma)
	\|P_{\lambda+n+\sigma}\|_\gamma^4.
\end{equation}
By Proposition \ref{lem:Aasymp}, one has
\[
A^\ell_\lambda(\sigma)
=
\ind_{\{\sigma\in \mathcal A_\ell\}}
+O\left(\frac1{\gap(\lambda)}\right),
\qquad
A^m_\lambda(\sigma)
=
\ind_{\{\sigma\in \mathcal A_m\}}
+O\left(\frac1{\gap(\lambda)}\right).
\]
Recall now that  $\ell\neq m $. Thus $\mathcal A_\ell\cap\mathcal A_m=\varnothing$ and it follows that, for every  $\sigma\in S_\ell$,
\[
A^\ell_\lambda(\sigma)A^m_\lambda(\sigma)
=
O \left(\frac1{\gap(\lambda)}\right).
\]
By reductions similar to those in Section \ref{sec:generalk}, each
inner sum in \eqref{eq:Mnserie} may be written in the form
\[
\sum_\lambda a^\sigma_\lambda(n)\,G_\lambda(n),
\]
where  $(a^\sigma_\lambda(n))_{\lambda,n}$ is bounded and satisfies
\[
a^\sigma_\lambda(n)\longrightarrow 0
\qquad\text{as }\gap(\lambda)\to\infty,
\]
uniformly in  $n$. By the same argument as in Lemma \ref{lem:asymp}, this implies
\[
\sum_\lambda a^\sigma_\lambda(n)\,G_\lambda(n)
=
o \left(\sum_\lambda G_\lambda(n)\right)
=
o \left(n^{-N(1-2\gamma)}\right), \qquad \text{ as } n\rightarrow\infty.
\]
Since  $S_\ell$ is finite, summing over  $\sigma\in S_\ell$ finally gives
\[
M_n(\ell,m)=o \left(n^{-N(1-2\gamma)}\right), \qquad \text{ as } n\rightarrow\infty.
\]
This completes the proof.
\end{proof}

Combining the results of this section and the previous one, all mixed moments converge to those of independent complex Gaussians $\mathcal N_\C(0,\kappa(\beta))$. Since the limiting law is moment-determinate, this proves the finite-dimensional convergence in Theorem \ref{thm:CLT}.

\section{Convergence toward complex white noise}\label{sec:whitenoise}
We now prove Theorem \ref{thm:whitenoise}. Fix $s<-\frac{1}{2}$ and choose $s'$ such that $s<s'<-\frac{1}{2}$.
Define the rescaled random distributions
\begin{equation*}
	X_n \de n^{\frac{1-\beta^2}{2}}\,e^{\ii n\theta}\,\M \in \mathcal D'(\mathbb T),
\end{equation*}
whose Fourier coefficients are
\begin{equation*}
	\widehat{X_n}(k)
	= n^{\frac{1-\beta^2}{2}}\,c_{k-n}.
\end{equation*}

\subsection*{Tightness in $H^s$}
We claim that
\begin{equation}\label{eq:tight}
	\sup_{n\in\N}\, \E{\norm{X_n}_{H^{s'}}^2}<+\infty.
\end{equation}
Indeed,
\begin{equation*}
	 \E{\norm{X_n}_{H^{s'}}^2}
	=\sum_{k\in\Z}\left(1+k^2\right)^{s'}\,\mathbb E|\widehat{X_n}(k)|^2
	= n^{1-\beta^2}\sum_{k\in\Z}\left(1+k^2\right)^{s'}\,\mathbb E|c_{k-n}|^2.
\end{equation*}
Using the fact that for some $C>0$, one has
\begin{equation*}
	\mathbb E|c_m|^2 \leq C\; (1+|m|)^{-(1-\beta^2)},\qquad m\in\Z,
\end{equation*}
and changing variables $m=k-n$ gives
\begin{equation*}
\E{\norm{X_n}_{H^{s'}}^2}\leq C\; n^{1-\beta^2}\sum_{m\in\Z}(1+(m+n)^2)^{s'}(1+|m|)^{-(1-\beta^2)}.
\end{equation*}
Splitting the sum into $|m|\ge n/2$ and $|m|<n/2$ yields, for some constant $C_1>0$,
\begin{equation*}
 \E{\norm{X_n}_{H^{s'}}^2}
\leq C_1\; \bigg(\sum_{j\in\Z}(1+j^2)^{s'} +n^{1+2s'}\bigg),
\end{equation*}
which is bounded in $n$ since $s'<-1/2$. This proves \eqref{eq:tight}.
Since the embedding $H^{s'}(\mathbb T)\hookrightarrow H^s(\mathbb T)$ is compact for $s<s'$,
we conclude that $(X_n)$ is tight in $H^s(\mathbb T)$.

\subsection*{Identification of the limit.}
Let $\phi(\theta)=\sum_{|k|\le K} a_k e^{\ii k\theta}$ be a trigonometric polynomial, one has
\begin{equation*}
	\langle X_n,\phi\rangle
	= 2\pi \,n^{\frac{1-\beta^2}{2}}\sum_{|k|\le K}a_k\,c_{-n-k}.
\end{equation*}
By the joint convergence of Theorem \ref{thm:CLT} (the same convergence holds for
negative frequencies, by reflection invariance of the field), we have the convergence in distribution
\begin{equation*}
n^{\frac{1-\beta^2}{2}}(c_{-n-K},\ldots,c_{-n+K})
\longrightarrow (\xi_{-K},\ldots,\xi_K),\quad\text{as }n\rightarrow\infty.
\end{equation*}
where $(\xi_k)_{k\in\Z}$ are i.i.d. complex Gaussians $\mathcal N_{\C}(0,\kappa(\beta))$.
Therefore, in distribution,
\begin{equation*}
	\langle X_n,\phi\rangle \longrightarrow2\pi \sum_{|k|\le K}a_k\,\xi_{-k}.
\end{equation*}
Define the limiting random distribution
\begin{equation*}
	W \de \sum_{k\in\Z}\xi_k\,\e^{\ii k\theta},
\end{equation*}
which lives in $H^s(\mathbb T)$ since
\begin{equation*}
\E{\norm{W}_{H^s}^2}=\kappa(\beta)\sum_{k\in\Z}(1+k^2)^s<+\infty.
\end{equation*}
We have $\langle X_n,\phi\rangle\longrightarrow \langle W,\phi\rangle$ in distribution.

Combining these two results yields Theorem \ref{thm:whitenoise}.

\subsection*{Statements and Declarations}

\subsubsection*{Acknowledgements}
The authors would like to thank Baptiste Cerclé for very stimulating discussions.

\subsubsection*{Funding}
This work was supported by the Swiss National Science Foundation (SNSF) grant 207410, ``2d constructive field theory with exponential interactions''.

\subsubsection*{Competing interests}
The authors have no competing interests to declare that are relevant to the content of this article.

\subsubsection*{Data availability}
Data sharing not applicable to this article as no datasets were generated or analysed during the current study.

\appendix
\section{Partitions and Young diagrams}\label{partition}
A partition is a finite non-increasing sequence of nonnegative integers
$\lambda=(\lambda_1,\lambda_2,\ldots)$ with $\lambda_i=0$ for $i$ large enough.
Its size is $|\lambda|\de\sum_{i\ge1}\lambda_i$, and its length
$\ell(\lambda)$ is the number of positive parts. In the present work, we identify partitions of length $\ell(\lambda)\leq N$ with non-increasing sequences of $N$ integers so that $\lambda=(\lambda_1,\dots,\lambda_N)$.

The Young diagram of $\lambda$ is the set of boxes
\begin{equation*}
	\{ (i,j)\in\N^2 : 1\le i\le \ell(\lambda) \text{ and } 1\le j\le \lambda_i \},
\end{equation*}
with rows indexed from top to bottom and columns from left to right.
The conjugate partition $\lambda'$ is defined by $\lambda'_j=|\{\,i:\lambda_i\ge j\,\}|$ and corresponds to reflecting the diagram across the main diagonal.

For a cell $s=(i,j)$ in the diagram, the arm-length and leg-length are defined by
\begin{equation*}
		a_\lambda(s)\de \lambda_i-j,\qquad l_\lambda(s)\de \lambda'_j-i.
\end{equation*}

\begin{figure}[H]
	\centering
	\subfigure{
\begin{tikzpicture}[scale=0.45]

	\def\SelRow{2}
	\def\SelCol{7}
	
	\newcount\SelRowLenCount
	\newcount\LegCount
	\SelRowLenCount=0
	\LegCount=0
	
	\foreach \RowIndex/\RowLength in {0/18,1/16,2/13,3/11,4/9,5/7,6/6,7/4}{
		\ifnum\RowIndex=\SelRow\relax
		\SelRowLenCount=\RowLength\relax
		\fi

		\ifnum\RowIndex>\SelRow\relax
		\ifnum\RowLength>\SelCol\relax 
		\advance\LegCount by 1\relax
		\fi
		\fi
	}

	\pgfmathtruncatemacro{\Arm}{\the\SelRowLenCount - 1 - \SelCol}
	
	\foreach \row/\RowLength in {0/18,1/16,2/13,3/11,4/9,5/7,6/6,7/4}{
		\foreach \x in {0,...,\numexpr\RowLength-1\relax}{
			
			\def\BoxFill{white}
			
			\ifnum\x=\SelCol\relax
			\ifnum\row>\SelRow\relax
			\def\BoxFill{red!25}
			\fi
			\fi
			
			\ifnum\row=\SelRow\relax
			\ifnum\x>\SelCol\relax
			\def\BoxFill{blue!20}
			\fi
			\fi
			
			\ifnum\row=\SelRow\relax
			\ifnum\x=\SelCol\relax
			\def\BoxFill{yellow!35}
			\fi
			\fi
			
			\fill[\BoxFill] (\x,-\row) rectangle ++(1,-1);
			\draw[thick] (\x,-\row) rectangle ++(1,-1);
			
			\ifnum\row=\SelRow\relax
			\ifnum\x=\SelCol\relax
			\node at (\x+0.5,-\row-0.5) {\large $s$};
			\fi
			\fi
		}
	}
	
	\node at (10,1.2) {\Large $\lambda$};
	\end{tikzpicture}
	}
	\caption{The Young diagram of a partition $\lambda$ with $a_\lambda(s)=5$ and $l_\lambda(s)=2$.}
	\label{fig:+armleg}
	\end{figure}
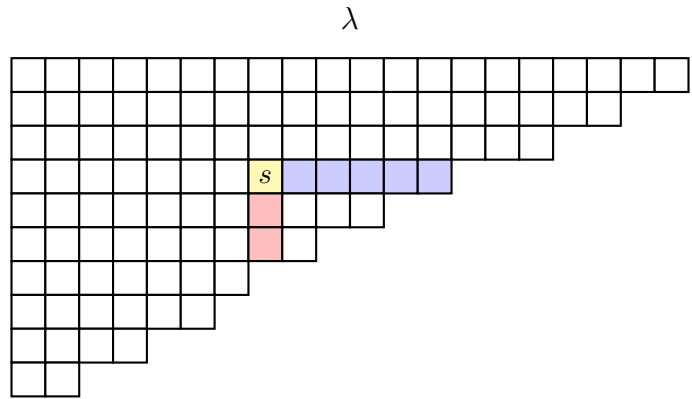
	
If $\lambda\subset\mu$ (i.e. $\lambda_i\le \mu_i$ for all $i$), the skew diagram
$\mu/\lambda$ is the set-theoretic difference of their Young diagrams.  A skew diagram $\mu/\lambda$ is a horizontal strip if it contains at most one cell in each column, and a vertical strip if it contains at most one cell in each row. 

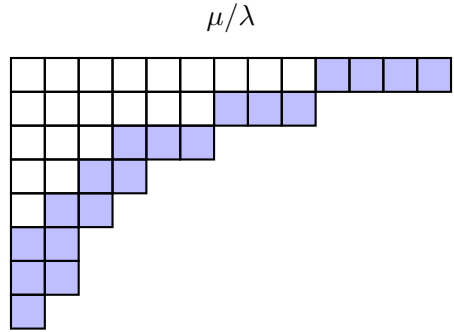
\begin{figure}[H]
	\centering
	\subfigure{

\begin{tikzpicture}[scale=0.45]
	
	\foreach \row/\MuLen in {0/13,1/9,2/6,3/4,4/3,5/2,6/2,7/1}{
		\foreach \x in {0,...,\numexpr\MuLen-1\relax}{
			\fill[blue!25] (\x,-\row) rectangle ++(1,-1);
		}
	}

	\foreach \row/\LamLen in {0/9,1/6,2/3,3/2,4/1}{
		\foreach \x in {0,...,\numexpr\LamLen-1\relax}{
			\fill[white] (\x,-\row) rectangle ++(1,-1);
		}
	}
	
	\foreach \row/\MuLen in {0/13,1/9,2/6,3/4,4/3,5/2,6/2,7/1}{
		\foreach \x in {0,...,\numexpr\MuLen-1\relax}{
			\draw[thick] (\x,-\row) rectangle ++(1,-1);
		}
	}

	\node at (6.5,1.2) {\large $\mu/\lambda$};
	
	\end{tikzpicture}
	}
		\caption{The skew diagram obtained from two partitions $\lambda$ and $\mu$ represented by the blue cells.}
	\label{fig:+skew}
	\end{figure}

A $p$-vertical strip is a vertical strip with $p$ elements. We use the notation and conventions of \cite[Chapter I]{Macdonald95}, where further background on these notions can be found.

\bibliographystyle{plain}
\bibliography{biblio_clean}

 \end{document}